\newtheorem{Theorem}{Theorem}[section]
\newtheorem{proposition}[Theorem]{Proposition}
\newtheorem{theorem}[Theorem]{Theorem}
\newtheorem{corollary}[Theorem]{Corollary}
\newtheorem{lemma}[Theorem]{Lemma}
\newtheorem*{theoremintro}{Theorem}
\theoremstyle{definition}
\newtheorem{definition}[Theorem]{Definition}
\newtheorem{remark}[Theorem]{Remark}
\newtheorem*{question*}{Question}
\newtheoremstyle{Fancyplain}
{\topsep}   
{\topsep}   
{}  
{0pt}       
{\bfseries} 
{}         
{5pt plus 1pt minus 1pt} 
{\thmname{#1} \thmnumber{#2}:\thmnote{\ #3}.\ }
\theoremstyle{Fancyplain}
\newtheorem{step}{Step}
\def\defin#1{\textbf{#1}} 
\newcommand\cost{\mathcal{C}}
\DeclareMathOperator{\dom}{\mathrm{dom}}
\DeclareMathOperator{\rng}{\mathrm{rng}}
\newcommand{\N}{\mathbb N}
\newcommand{\Z}{\mathbb Z}
\newcommand{\inv}{^{-1}}
\newcommand{\la}{\left\langle}
\newcommand{\ra}{\right\rangle}
\DeclareMathOperator{\supp}{\mathrm{supp}}
\newcommand{\abs}[1]{\left\lvert #1\right\rvert}
\newcommand{\PK}{\mathcal{K}}
\def\id{\mathrm{id}}
\def\Aut{\mathrm{Aut}}
\def\FF{\mathbf F}
\newcommand{\acts}{\curvearrowright}
\def\RR{\mathscr{R}}
\newcommand{\Sym}{\mathrm{Sym}}
\newcommand\Stab{\mathrm{Stab}}
\newcommand\Sub{\mathrm{Sub}}
\newcommand{\induced}[1]{\updownarrow #1}
\author{A. Carderi, D. Gaboriau, F. Le Maître}
\title{On dense totipotent free subgroups in full groups}
\date{}
\begin{document}
\maketitle
\begin{abstract}
We study probability measure preserving (p.m.p.) non-free actions of free groups and the associated IRS's. The perfect kernel of a countable group $\Gamma$ is the largest closed subspace of the space of subgroups of $\Gamma$ without isolated points.
We introduce the class of totipotent ergodic p.m.p.\ actions of $\Gamma$: 
those for which almost every point-stabilizer has dense conjugacy class 
in the perfect kernel. Equivalently,
 the support of the associated  IRS is as large as possible,
 namely it is equal to the whole perfect kernel.
We prove that every ergodic p.m.p.\ equivalence relation $\RR$ of cost $<r$ can 
be realized by the orbits of an action of the free group $\FF_r$ on $r$ generators that is totipotent and such that the image in the full group $[\RR]$ is 
dense. We explain why these actions have no minimal models.
	This also provides a continuum of pairwise orbit inequivalent invariant random subgroups of $\FF_r$, all of whose supports are equal to the whole space of infinite index subgroups.
We are led to introduce a property of topologically generating pairs for full groups (we call evanescence) and establish a genericity result about their existence. 
We show that their existence characterizes cost $1$. 
\end{abstract}

\noindent
\textbf{MSC:}
{37A20, 22F50, 22F10, 37B05.}

\noindent
\textbf{keywords:}
{Measurable group actions, non-free actions, free groups, transitive actions of countable groups, IRS, space of subgroups,
ergodic equivalence relations, orbit equivalence.}

\tableofcontents

\section{Introduction}
\epigraph{\it In this context, clarifying precisely what is meant by 
“totipotency” 
and how it is experimentally determined will both avoid unnecessary controversy 
and potentially reduce inappropriate barriers to research. }{--- M.\ Condic, 
	\cite{condicTotipotencyWhatIt2014}}
Let $\Gamma$ be a countable discrete group. 
Denote by $\Sub(\Gamma)$ the space of  subgroups of $\Gamma$. It is equipped with the compact totally disconnected topology of pointwise convergence and with the continuous 
$\Gamma$-action by conjugation. 
Let $\beta$ be a Borel $\Gamma$-action on the standard Borel space $X\simeq 
[0,1]$. 
Its \defin{stabilizer map} 
\newcommand{\Stabmap}{\mathrm{Stab}}
\begin{eqnarray*}
\Stabmap^{\beta}&:&X\to \Sub(\Gamma)\\
&& x\mapsto \{\gamma\in\Gamma\colon \beta(\gamma)x=x\}
\end{eqnarray*}
 is $\Gamma$-equivariant.
 If $\mu$ is a probability measure on $X$ which is preserved by 
 $\beta$, then the push-forward measure $\Stabmap^{\beta}_*\mu$ is invariant under conjugation. It is 
 the prototype of an Invariant Random Subgroup (\defin{IRS}).
When $\mu$ is atomless and the stabilizer map is essentially injective (a.k.a.\ 
the action 
$\beta$ is \defin{totally non-free}),
the support of the associated IRS $\Stabmap^{\beta}_*(\mu)$ has no isolated 
points: it is a perfect set.
The largest closed subspace of $\Sub(\Gamma)$ with no isolated points 
is called the \defin{perfect kernel} of $\Sub(\Gamma)$.
We say that an ergodic
probability measure-preserving 
(\defin{p.m.p.})\
action is \defin{totipotent} when the support of its IRS is equal to the 
perfect kernel of $\Sub(\Gamma)$. By ergodicity, the following stronger 
property holds: {\em almost every element of the associated IRS has dense 
orbit in 
the perfect kernel} (see Proposition \ref{prop: dense orbit}). We call such an 
IRS \textbf{totipotent}.

Given a p.m.p.\ action $\Gamma\acts^{\beta} (X,\mu)$, we consider the 
associated
\defin{p.m.p.\ equivalence relation} $$\RR^{\beta}\coloneqq\{(x,y) \in X\times 
X: 
\beta(\Gamma)x=\beta(\Gamma)y\}$$ and its \defin{full group} $[\RR^{\beta}]$ 
as the group of 
all measure-preserving transformations whose graph is contained in $\RR^\beta$.
The (bi-invariant) \defin{uniform distance} between two measure-preserving 
transformations $S$ and $T$ is defined by 
$d_u(T,S)\coloneqq\mu(\{x\in X: S(x)\neq T(x)\})$. It endows the full group 
$[\RR^{\beta}]$
with a Polish group structure.
The \defin{cost} is a numerical invariant attached to the equivalence relation 
$\RR^{\beta}$. If $\beta$ is a 
p.m.p.\ action of the free group $\FF_r$ on $r$ 
generators, then 
{\em the cost of $\RR^{\beta}$ is exactly $r$ when $\beta$ is free
and the cost of $\RR^{\beta}$ is $<r$ when $\beta$ is non-free}
 \cite{Gab00}.

The main result of \cite{lemaitrenumbertopologicalgenerators2014} is that for 
any 
ergodic p.m.p.\ equivalence relation $\RR $,
{\em if $\RR$ has cost $<r$ for some integer $r\geq 2$, then there 
exists a homomorphism $\tau:\FF_r\to [\RR]$ with dense image}. 

This result has been sharpened in order to ensure that \emph{the homomorphism 
$\tau$ is injective}. Actually,  
the associated (almost 
everywhere defined) p.m.p.\ action $\alpha_\tau$ can be made to satisfy the 
following 
two opposite conditions: \emph{high faithfulness} and  \emph{amenability on 
$\mu$-almost 
	every orbit} \cite{lemaitreHighlyFaithfulActions2018}.

These two conditions can be phrased in terms of the support of the IRS 
associated to the 
action: 
the first one means that the support contains the trivial subgroup, and one 
can show that the 
second one is equivalent to the support containing a 
co-amenable subgroup 
 (which in the construction of 
\cite{lemaitreHighlyFaithfulActions2018} is the kernel 
of a certain surjective homomorphism $\FF_r\to \Z$).

The purpose of the present paper is to show that the homomorphism can  
be chosen so that the support of the associated IRS is actually the largest perfect subspace of $\Sub(\FF_r)$, 
 which consists 
of all its infinite index subgroups (see Proposition~\ref{rem: perfect kernel for free groups}).

\begin{theoremintro}
	Let $\RR $ be an ergodic p.m.p.\ equivalence relation whose cost is 
	$<r$ for some integer $r\geq 2$. Then there exists a homomorphism 
	$\tau: \FF_r\to [\RR ]$ whose 
	image is dense and whose associated p.m.p.\ action $\alpha_\tau$ is totipotent.
\end{theoremintro}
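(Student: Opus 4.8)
We assume throughout that $\RR$ is aperiodic (infinite classes a.e.); this is necessary for the conclusion, since every stabilizer of a p.m.p.\ action with orbit relation $\RR$ has infinite index, its orbits being infinite. In particular $\mu$ is non-atomic. By Proposition~\ref{rem: perfect kernel for free groups} the perfect kernel of $\Sub(\FF_r)$ is the closed set $\Sub_\infty(\FF_r)$ of infinite-index subgroups, so for every homomorphism $\tau$ the support of the IRS $\Stab^{\alpha_\tau}_*\mu$ is automatically contained in it, and $\alpha_\tau$ is totipotent as soon as this support is dense in $\Sub_\infty(\FF_r)$. Fix a countable dense set $\{H_n:n\in\N\}\subseteq\Sub_\infty(\FF_r)$. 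Since the clopen sets $\{K:K\cap F=H\cap F\}$ with $F\subseteq\FF_r$ finite form a neighbourhood basis of $H$, a subgroup $H_n$ belongs to the support of $\Stab^{\alpha_\tau}_*\mu$ as soon as, for every $k$, the set of $x$ with $\Stab^{\alpha_\tau}(x)\cap B(k)=H_n\cap B(k)$ has positive measure, where $B(k)\subseteq\FF_r$ is the ball of radius $k$. The key point is that membership of $x$ in this set depends only on the restrictions of the $\tau(w)$, $w\in B(k)$, to the ball of radius $k$ around $x$ in the Schreier graph of $\alpha_\tau$. So it suffices to produce, for every pair $(n,k)$, a positive-measure set on which this finite local pattern is realised.

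\textbf{Strategy: plant first, then complete.} The plan is to build $T_1,\dots,T_r\in[\RR]$ in two phases. In the first phase we ``plant'', for each $(n,k)$ and on a set of tiny positive measure, an isomorphic copy of the radius-$k$ ball of the Schreier graph of $\FF_r\acts\FF_r/H_n$, using only a negligible total amount of measure and in a way that can never be disturbed afterwards. After this phase the $T_i$ are still undefined on a set of measure arbitrarily close to $1$; in the second phase we complete them on that set — running the machinery of \cite{lemaitrenumbertopologicalgenerators2014,lemaitreHighlyFaithfulActions2018} — into elements of $[\RR]$ such that $\langle T_1,\dots,T_r\rangle$ has orbit equivalence relation exactly $\RR$ and dense closure; since $\RR$ has cost $<r$, the slack $r-\mathrm{cost}(\RR)>0$ leaves room for this even after the negligible first-phase commitments. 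The resulting homomorphism $\tau:\FF_r\to[\RR]$ (well-defined since $\FF_r$ is free) then has dense image, and by construction $\Stab^{\alpha_\tau}_*\mu$ has positive mass on each planted local pattern; hence its support contains every $H_n$, is dense, and — being contained in $\Sub_\infty(\FF_r)$ — equals the perfect kernel, so $\alpha_\tau$ is totipotent.

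\textbf{The planting lemma.} This is the core step; we apply it countably often, once per pair $(n,k)$, against the negligible previously-planted material. Let $\Gamma$ be the ball of radius $k$ around the base coset $v_0=H_n$ in the Schreier graph of $\FF_r\acts\FF_r/H_n$, a finite $\{1,\dots,r\}$-edge-labelled graph — possibly with loops and vertex identifications — with vertex set $V$; as $H_n$ has infinite index, $\FF_r/H_n$ is infinite and $\Gamma$ is a proper subgraph. Since $\RR$ is aperiodic, one may choose a set $Z$ of arbitrarily small positive measure and partial $\mu$-preserving bijections $\sigma_v$ ($v\in V$) with graph in $\RR$, $\sigma_{v_0}=\id$, carrying $Z$ onto pairwise disjoint sets $Z_v:=\sigma_v(Z)$ lying outside the previously-committed region. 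One then sets, for each labelled edge $v\xrightarrow{\,i\,}v'$ of $\Gamma$, the map $T_i$ on $Z_v$ equal to $\sigma_{v'}\sigma_v^{-1}\colon Z_v\to Z_{v'}$ (the identity if $v'=v$), balances measures by a finite surgery, and — using that $\Gamma$ is finite while $\RR$-classes are infinite — routes at least one pending ``escape'' edge out of the cluster $\bigcup_{v\in V}Z_v$ into the still-free region, so that the cluster remains $\RR$-connected to the bulk. Now fix $x\in Z=Z_{v_0}$ and $w\in B(k)$: tracing the letters of $w$ starting from $x$ exactly follows the corresponding walk from $v_0$ in $\Gamma$ — the trace stays within distance $k$ of $v_0$, hence inside $\Gamma$, since $|w|\le k$, so it only ever uses the planted maps — and, by a telescoping of the $\sigma_v$'s, it returns to $x$ if $w\in H_n$ and lands in some $Z_v$ with $v\neq v_0$ (hence disjoint from $Z_{v_0}\ni x$) if $w\notin H_n$. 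Therefore $\Stab^{\alpha_\tau}(x)\cap B(k)=H_n\cap B(k)$ for every $x\in Z$; and since this depends only on the $T_i$ restricted to the cluster, which are now committed, it survives the rest of the construction.

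\textbf{The main obstacle.} The delicate point is the second phase: one must check that the construction of a dense homomorphism $\FF_r\to[\RR]$ with orbit relation $\RR$ of \cite{lemaitrenumbertopologicalgenerators2014,lemaitreHighlyFaithfulActions2018} can be carried out \emph{relatively}, with the $T_i$ prescribed in advance on a set $P$ of tiny measure (the planted clusters together with the escape edges). This should follow from the slack $r-\mathrm{cost}(\RR)>0$: one first makes $\langle T_1,\dots,T_r\rangle$ generate $\RR$ by squeezing a generating graphing of $\RR|_{X\setminus P}$ — of cost $<r$, as $\mathrm{cost}(\RR|_{X\setminus P})\to\mathrm{cost}(\RR)$ when $\mu(P)\to0$ — into the free parts of the $T_i$, the escape edges supplying the missing connections to the clusters, and the room still left over, positive since $\mathrm{cost}(\RR)<r$ and $\mu(P)$ is tiny, feeds the density engine. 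I expect this adaptation to be the main work, as it requires opening up the cited construction rather than invoking it verbatim. The remaining items — the finite combinatorics of each $\Gamma$ (loops; boundary vertices at distance exactly $k$, whose pending edges supply the escape edges and the room for the completion), the bookkeeping keeping the countably many clusters disjoint and summably small, and the verification that the limit is genuinely a p.m.p.\ action with orbit relation $\RR$ — are routine. (One could instead phrase the argument via Baire category: in a suitable Polish space of candidate $r$-tuples, ``orbit relation $\RR$ with dense image'' is a dense $G_\delta$ by the results just cited, and ``the IRS is totipotent'' is a $G_\delta$ — using the lower semicontinuity of $\tau\mapsto\mu(\mathrm{Fix}\,\tau(w))$ — whose density is precisely the planting lemma; the intersection is then nonempty.)
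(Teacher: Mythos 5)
Your overall strategy is the paper's: realize every sufficiently large Schreier ball of every infinite transitive $\FF_r$-action on small positive-measure pieces (the paper's Steps 2--3 and 5 do exactly your ``planting lemma'', via finite partial Schreier graphs $G_n$ completed to genuine finite actions $\rho_n$ and amplified onto pieces $C_n$ of $X\setminus Y$), and use the cost slack to make the remaining, unperturbed part dense. The planting half of your argument is sound. But the step you defer as ``the main work'' --- completing the generators so that the image is \emph{dense in $[\RR]$} despite the prescribed periodic perturbation --- is precisely the paper's central difficulty, and the cost slack alone does not resolve it. The slack lets you \emph{generate the equivalence relation}; density in the full group is much stronger. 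Concretely: if $T_i=S_i\sqcup R_i$ with $R_i$ the committed planted part on $P$, then to approximate $g\in[\RR]$ you need words $w$ with $w(T_1,\dots,T_r)$ close to $g$, and a priori $w(T_\bullet)$ differs from $g$ on essentially all of $\supp w(R_\bullet)\subseteq P$; since $\mu(P)>0$ is fixed, the naive argument only gives approximation up to a fixed positive error. The paper's mechanism for killing this error is the notion of an \emph{evanescent pair of topological generators} (Definition~\ref{def: evanescent pair}, Theorem~\ref{Th: existence of evanescent pair}): one arranges $\alpha(a_1)=T$ (a copy of the odometer on $Y$) and $\alpha(a_2)=V\cdot(WU_2W^{-1})\cdot(I\alpha_\infty(a_2))$ with mutually disjoint supports, where all the perturbing factors are periodic with uniformly bounded period on any finite union $C_1\cup\dots\cup C_{n_0}$; then $\alpha(a_2)^k=V^k\alpha_\infty(a_2)^k$ for suitable $k$, and evanescence guarantees that conjugates of $V^k$ by powers of $T$ still topologically generate $[\RR_T]$, with error only $\mu(C_{>n_0})\to 0$. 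One then climbs from $[\RR_T]$ to $[\RR_{\induced Y}]$ using the robustness of Proposition~\ref{prop: the good pre-cycles} (the pre-cycles may be extended by \emph{arbitrary} elements of $[\RR]$, which is what tolerates the perturbation on $X\setminus Y$), and finally to $[\RR]$ via Corollary~\ref{crl: erg+Amu1/2} with $\mu(Y)>1/2$. Without this (or an equivalent device) your second phase is an open gap, not a routine adaptation.

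Your parenthetical Baire-category fallback does not work either, and the paper explains why: for finite $r$ it is not even true that a generic $\tau:\FF_r\to[\RR]$ generates $\RR$, and whether homomorphisms with dense image are dense among those generating $\RR$ is stated as an open question in the introduction; so ``orbit relation $\RR$ with dense image is a dense $G_\delta$'' is unjustified (this is exactly why Bowen and Eisenmann--Glasner had to restrict to tuples whose first coordinate acts freely, which forecloses totipotency). One smaller inaccuracy: the paper does not take the completion region $X\setminus P$ to be almost all of $X$; it takes $\mu(Y)>1/2$ with the restricted normalized cost still $<r$ (induction formula), which is what Corollary~\ref{crl: erg+Amu1/2} needs --- tininess of $P$ is neither required nor, by itself, helpful for the density step.
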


The density in $[\RR]$ of the image of $\tau$ implies that 
$\RR^{\alpha_\tau}\simeq \RR$ and that the stabilizer map 
$\Stabmap^{\alpha_\tau}$ 
is essentially injective \cite[Prop.~2.4]{lemaitreHighlyFaithfulActions2018}.
 In particular, the actions $\FF_r\acts (\Sub(\FF_r),\Stabmap^{\alpha_\tau}_*\mu)$ and $\FF_r\acts^{\alpha_\tau} (X,\mu)$ are conjugate (thus produce the same equivalence relation)
  and almost every subgroup for the IRS $\Stabmap^{\alpha_\tau}_*\mu$ equals its own normalizer. 
It follows that up to isomorphism, \emph{every p.m.p.\ ergodic equivalence 
relation of cost $<r$ comes from a totipotent IRS of $\FF_r$	
(actually, from continuum many different totipotent IRS's of $\FF_r$, see Remark~\ref{rem: continuum many IRS for R})}.

Such a statement is optimal since p.m.p.\ equivalence relations of cost $\geq 
r$ 
cannot come from a non-free $\FF_r$ action. To our knowledge, it was not 
even clear until now whether $\FF_r$ admits ergodic totipotent IRS's. Since 
there are continuum many pairwise non-isomorphic ergodic p.m.p.\ equivalence 
relations of cost $<r$, our approach provides continuum many 
pairwise distinct ergodic totipotent IRS's 
of the 
free group on $r$ generators, whose associated equivalence relations are even 
non-isomorphic.

Another interesting fact about totipotent p.m.p.\ $\FF_r$-actions is that they have no minimal model, i.e., they cannot be realized as minimal actions on a 
compact space. Indeed, 
it follows from a result of Glasner-Weiss \cite[Cor. 
4.3]{glasnerUniformlyRecurrentSubgroups2015} that as 
soon as the support of the IRS of a given 
p.m.p.\ action contains two distinct minimal subsets (e.g.\ when it contains 
two distinct fixed 
points), the action does not admit a minimal model (see Theorem \ref{thm: no 
minimal model}).
In our case the perfect kernel of $\Sub(\FF_r)$ contains a 
continuum of fixed points (namely, all infinite index normal subgroups), so 
that 
totipotent p.m.p.\ actions of $\FF_r$ are actually very far from admitting a 
minimal model.

Let us now recall the context around our construction. 
The term IRS was coined by Abert-Glasner-Virag  
\cite{Abert-Glasner-Virag-2014} and has become an important subject on its own 
at the intersection of group theory, probability theory and dynamical systems.
The notion of IRS is a natural generalization of a normal subgroup, especially 
in the direction of superrigidity type results. It has thus been present 
implicitly in the work of many authors, a famous landmark being the 
Stuck-Zimmer Theorem \cite{Stuck-Zimmer-1994}, which gives examples of groups 
admitting very few IRS's.
On the contrary, some groups admit a ``zoo'' of IRS's, 
starting 
with free groups \cite{bowenInvariantRandomSubgroups2015}  (see 
\cite{BGK-2015-IRS-Lamp,Bowen-Grigor-Krav-2017,
	kechrisCoinductionInvariantRandom2019}  for other 
examples).

In particular, Bowen proved that every p.m.p.\ ergodic equivalence 
relation of cost $<r$ comes from some IRS of $\FF_r$. He obtained this 
result 
	through a Baire category argument which required that the first generator 
	acts freely. In particular, such IRS's can never be totipotent.

Eisenmann and Glasner then used homomorphisms $\FF_r\to[\RR]$ with dense image 
so as to obtain interesting IRS's of $\FF_r$ 
\cite{eisenmannGenericIRSFree2016}. 
They proved that given a homomorphism $\Gamma\to[\RR ]$ with dense image, 
the associated IRS is always  
co-highly transitive almost surely, which means that for almost every $\Lambda\leq 
\Gamma$, the 
$\Gamma$-action on $\Gamma/\Lambda$ is $n$-transitive for every $n\in\N$. They 
also showed that the IRS's of $\FF_r$ obtained by Bowen for cost $1$ 
equivalence relations 
are faithful and moreover almost surely co-amenable.

The third-named author then used 
a modified version of his result on the topological rank of full groups to 
show that every p.m.p.\ ergodic equivalence 
relation of cost $<r$ comes from a co-amenable, co-highly transitive and 
faithful
 IRS of $\FF_r$ \cite{lemaitreHighlyFaithfulActions2018}. Also in this 
 construction, the first 
generator continues to act freely, thus preventing totipotency. Let us now briefly 
explain how our new construction (Section~\ref{sect: Proof of the main theorem}) allows us to circumvent this. 

The main idea is to use a smaller set $Y\subsetneq  X$ such that the 
restriction of $\RR $ to $Y$ still has cost $<r$, so that we can find some 
homomorphism $\FF_r\to [\RR _{\restriction Y}]$ with dense image. 
This provides us with some extra space in order to obtain totipotency via a 
well-chosen perturbation of the above homomorphism. 

This perturbation is  obtained by 
mimicking all Schreier balls on $X\setminus Y$ and then merging these amplifications with the 
action on $Y$ so as to obtain both density  in $[\RR]$ and totipotency.
The use of evanescent pairs of topological generators (see Definition~\ref{def: 
evanescent pair}) 
with Theorem \ref{Th: existence of evanescent pair} and Proposition \ref{prop: 
the good pre-cycles} will grant us that 
this perturbation maintains
the density. We establish in Theorem~\ref{thm: chara evanescent} that the existence of an evanescent pair of topological generators is equivalent with $\RR$ having cost $1$.

Finally, let us mention the case of the free group on infinitely many 
generators $\FF_\infty$. Here, the space of subgroups is already perfect  (see Proposition~\ref{rem: 
perfect kernel for free groups}), and 
one can easily adapt our arguments to show that:
{\em 
	For every ergodic p.m.p.\ equivalence relation $\RR $, there 
	exists a homomorphism $\tau: 
	\FF_\infty\to [\RR ]$ whose 
	image is dense and whose associated p.m.p.\ action $\alpha_\tau$ is totipotent.
}

This result could however also be obtained by a purely Baire-categorical argument: it is not hard to see that the 
space  of such homomorphisms is dense $G_\delta$ in the Polish space 
of 
all homomorphisms $\tau: \FF_\infty\to [\RR ]$. 

Going back to the case of 
finite rank, it is not 
even true that a generic homomorphism $\tau:\FF_r\to[\RR ]$ 
generates the equivalence relation $\RR $.
In order to hope for a similar genericity statement, one should first answer 
the following question.

\begin{question*}
	Consider a p.m.p.\ ergodic equivalence relation $\RR $ of cost
	$<r$.
	Is it true that, in the space of homomorphisms $\tau: \FF_r\to 
	[\RR ]$ whose image generates
	$\RR $, those with dense image are dense?
\end{question*}

The fact that Bowen and then Eisenmann-Glasner had to work in the even 
smaller 
space where the first generator acts freely indicates that a Baire-categorical
approach to our main result is 
 out 
of reach at the moment, if not impossible.

\paragraph{Acknowledgements.} 
We are grateful to  Sasha Bontemps, Yves Cornulier, Gabor Elek and Todor Tsankov for their comments on preliminary versions of this work. 

The  authors acknowledge funding by the ANR project GAMME ANR-14-CE25-0004.
A.~C.\ acknowledges funding by the Deutsche Forschungsgemeinschaft (DFG, German Research Foundation) – 281869850 (RTG 2229).
D.~G. is supported by the CNRS.
F.~L.M.\ acknowledges funding by the ANR projects ANR-17-CE40-0026 AGRUME and  ANR-19-CE40-0008 AODynG.

\section{Perfect kernel for groups and minimal models}

Let $\Gamma$ be a countable discrete group. 
The topology on its space of subgroups $\Sub(\Gamma)$ admits as a basis of open 
sets the $V(\mathcal{I}, \mathcal{O})\coloneqq\{\Lambda\in \Sub(\Gamma)\colon 
\mathcal{I}\subseteq \Lambda \text{ and } \mathcal{O}\cap \Lambda=\emptyset\}$ where
$\mathcal{I}$ and $\mathcal{O}$ are finite subsets of $\Gamma$.
By the Cantor-Bendixson theorem, $\Sub(\Gamma)$ decomposes in a unique way as 
the disjoint union of a perfect set, called the \defin{perfect kernel} 
$\PK(\Gamma)$ of $\Sub(\Gamma)$, and of a countable set. 
We indicate some isolation properties of subgroups:
\begin{enumerate}
	\item \label{it: infinitely generated -> not isolated}
	If $\Lambda\in \Sub(\Gamma)$ is not finitely generated, then writing 
	$\Lambda=(\lambda_j)_{j\in \N}$ we obtain $\Lambda$ as the non-trivial 
	limit of 
	the infinite index (both in $\Lambda$ and in $\Gamma$) of the finitely 
	generated subgroups $\Lambda_n\coloneqq\la \lambda_0, \lambda_1, \cdots, 
	\lambda_n\ra$.
	
	\item \label{it: G fg f.i. subgroups are isolated}
	If $\Gamma$ is finitely generated, then its finite index subgroups are 
	isolated. Indeed, a  finite index subgroup $\Lambda$ is finitely generated 
	as well and it is alone in the open subset defined by a finite family 
	$\mathcal I$ of generators and a finite family $\mathcal O$ of 
	representatives of its cosets $ \Gamma/\Lambda$ except $\{\Lambda\}$.
	
	\item \label{it: not fg --> not isolated}
	If $\Gamma$ is not finitely generated, then its finite index subgroups are 
	also not finitely generated and thus are not isolated by Property~\ref{it: 
		infinitely generated -> not isolated}.
\end{enumerate}

Let us denote by $\Sub_{\infty i}(\Gamma)$  the subspace of infinite index 
subgroups of $\Gamma$.
The following is probably well-known but we were not able to locate a proof in 
the literature.
\begin{proposition}\label{rem: perfect kernel for free groups}
	For the free group $\FF_r$ on $r$ generators, $2\leq r\leq \infty$, 
	\begin{enumerate}[label=(\roman*)]
		\item for finite $r\geq 2$, $\PK(\FF_r)= \Sub_{\infty i}(\FF_r)$;
		\item 
		for $r$ infinite,  $\PK(\FF_\infty)=\Sub(\FF_\infty)$.
	\end{enumerate}
\end{proposition}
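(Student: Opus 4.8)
The plan is to rely on the description of $\PK(\Gamma)$ recalled above as the largest closed subset of $\Sub(\Gamma)$ with no isolated point, together with the two isolation facts \ref{it: infinitely generated -> not isolated} and \ref{it: G fg f.i. subgroups are isolated}. For part~(i) I would establish the two inclusions between $\PK(\FF_r)$ and $\Sub_{\infty i}(\FF_r)$ separately. Since $\FF_r$ is finitely generated, item~\ref{it: G fg f.i. subgroups are isolated} tells us that every finite index subgroup is isolated in $\Sub(\FF_r)$, hence cannot lie in the perfect kernel; this gives $\PK(\FF_r)\subseteq\Sub_{\infty i}(\FF_r)$, and it also shows that the set of finite index subgroups is open (a union of isolated points), so that $\Sub_{\infty i}(\FF_r)$ is closed. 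By maximality of $\PK(\FF_r)$ it then only remains to prove that \emph{$\Sub_{\infty i}(\FF_r)$ has no isolated point}.

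So I would fix $\Lambda\leq\FF_r$ of infinite index and a basic neighbourhood $V(\mathcal I,\mathcal O)\ni\Lambda$ and look for another infinite index subgroup inside it. If $\Lambda$ is not finitely generated, item~\ref{it: infinitely generated -> not isolated} already produces finitely generated infinite index subgroups $\Lambda_n\neq\Lambda$ converging to $\Lambda$, so I may assume $\Lambda$ is finitely generated. Here I would argue with Stallings' folded labelled graphs: let $\Delta$ be the (finite, since $\Lambda$ is finitely generated) core graph of $\Lambda$, with base vertex $\ast$ and reduced loops at $\ast$ spelling exactly $\Lambda$. Infinite index of $\Lambda$ forces $\Delta$ to be non-complete --- some vertex $v$ lacks an outgoing edge with some label $a$ --- since otherwise $\Delta$ would be the full Schreier graph of $\Lambda$ and hence finite. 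Picking another generator $b$ (possible as $r\geq 2$) and a reduced word $p$ spelling a path $\ast\to v$ in $\Delta$, I would set $z_n\coloneqq p\,a^{n}b\,a^{-n}p\inv$ and $\Lambda_n\coloneqq\la\Lambda,z_n\ra$; geometrically, $\Lambda_n$ is the fundamental group of $\Delta$ with a ``lollipop'' glued at $v$, namely a fresh $a$-labelled segment of length $n$ carrying a $b$-loop at its far endpoint. The verifications to carry out are: $z_n$ is a reduced nontrivial word lying in $\Lambda_n\setminus\Lambda$ (since $v$ has no outgoing $a$-edge, spelling $z_n$ inside $\Delta$ gets stuck right after $p$, and the same constraint makes $z_n$ reduced); the enlarged graph is finite, folded and equal to its own core, so $\Lambda_n$ is finitely generated, and it is still non-complete, so $\Lambda_n$ has infinite index; and any reduced loop at $\ast$ in the enlarged graph that enters the lollipop must travel to its tip and back, hence has length $\geq 2n+1$, so every element of $\Lambda_n$ of length $\leq 2n$ already lies in $\Lambda$. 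Together with $\Lambda\subseteq\Lambda_n$, this means $\Lambda_n$ and $\Lambda$ have the same elements of length $\leq 2n$; choosing $n$ with $2n$ exceeding the lengths of all words in $\mathcal I\cup\mathcal O$ yields $\Lambda_n\in V(\mathcal I,\mathcal O)$ and $\Lambda_n\neq\Lambda$.

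For part~(ii), since $\PK(\FF_\infty)$ is a closed subset of $\Sub(\FF_\infty)$ it is enough to check that the whole (closed) space $\Sub(\FF_\infty)$ has no isolated point. Non-finitely-generated subgroups are again covered by item~\ref{it: infinitely generated -> not isolated}. If $\Lambda\leq\FF_\infty$ is finitely generated, it involves only finitely many of the free generators, so given $V(\mathcal I,\mathcal O)\ni\Lambda$ I would choose a free generator $x$ whose index is larger than those of all free generators occurring in a fixed generating set of $\Lambda$ or in the words of $\mathcal I\cup\mathcal O$, and take $\Lambda'\coloneqq\la\Lambda,x\ra$. Inside $\FF_\infty$ this is the free product $\Lambda\ast\la x\ra$, so by the normal form of a free product no word avoiding $x$ --- in particular no element of $\mathcal O$ --- can belong to $\Lambda'$ unless it already belongs to $\Lambda$; since moreover $\mathcal I\subseteq\Lambda\subseteq\Lambda'$ and $x\in\Lambda'\setminus\Lambda$, we get $\Lambda'\in V(\mathcal I,\mathcal O)\setminus\{\Lambda\}$. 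Hence $\Lambda$ is not isolated and $\PK(\FF_\infty)=\Sub(\FF_\infty)$.

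I expect the only real difficulty to be the finitely generated infinite index case of~(i): one must actually manufacture finitely generated infinite index subgroups that contain $\Lambda$ and agree with it on arbitrarily large balls, and the delicate points are the reducedness of the adjoined element, the fact that it is genuinely outside $\Lambda$, and the ``long detour'' estimate forcing closeness. Everything else is a routine consequence of the isolation facts already recorded in the excerpt.
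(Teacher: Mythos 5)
Your proposal is correct, and the reductions (finite index subgroups are isolated hence form an open set, non-finitely generated subgroups are handled by the recorded isolation fact, so everything rests on the finitely generated infinite index case) match the paper's. Where you genuinely diverge is in that last case. The paper invokes Marshall Hall's theorem to write $\Lambda$ as a free factor of a finite index subgroup $\Lambda*\Delta\leq\FF_r$ with $\Delta$ non-trivial, and then observes that the finitely generated infinite index subgroups $\Lambda*\la g^n\ra$ (for $g\in\Delta$ non-trivial, $n\geq 2$) converge non-trivially to $\Lambda$; for $\FF_\infty$ it first embeds $\Lambda$ with infinite index in some $\Lambda*\FF_2$ to reduce to finite rank. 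You instead build the approximants by hand on the Stallings core graph, gluing a length-$n$ lollipop at a vertex missing an edge, and your verifications (reducedness of $z_n$, $z_n\notin\Lambda$, foldedness and incompleteness of the enlarged core, the $2n+1$ detour estimate) are all sound --- note that the hypothesis that $v$ has no outgoing $a$-edge automatically prevents the access path $p$ from ending in $a^{-1}$, which is what makes $z_n$ reduced, as you say. Your treatment of $\FF_\infty$ via a fresh free generator is also correct and bypasses the reduction to finite rank. What the paper's route buys is brevity: convergence and infinite index of $\Lambda*\la g^n\ra$ follow from free product normal forms in one line once Hall's theorem is quoted. What your route buys is self-containedness and an explicit quantitative statement (agreement of $\Lambda_n$ and $\Lambda$ on the ball of radius $2n$), at the cost of carrying the Stallings folding machinery. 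Both are complete proofs.
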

\begin{proof}
	We first show that if $\Lambda\in \Sub_{\infty i}(\FF_r)$, $2\leq r\leq 
	\infty$, then it is a non-trivial limit of finitely generated infinite 
	index subgroups of $\FF_r$.
	If $\Lambda$ is not finitely generated, then Property~\ref{it: infinitely 
		generated -> not isolated} above  applies.
	Thus assume $\Lambda$ is finitely generated.
	If $r$ is infinite, then $\Lambda$ has infinite index in some 
	finitely generated non-cyclic free subgroup $\Lambda\leq \Lambda*\FF_2\leq \FF_\infty$. We can thus assume that 
	the rank $r\geq 2$ is finite.
	By Hall theorem, $\Lambda$ is a free factor of a finite index subgroup 
	$\Lambda*\Delta$ of the free group $\FF_r$ (we include the case $\Lambda=\{1\}$).
	Since $\Lambda$ has infinite index, $\Delta$ is non-trivial.
		If $g\in \Delta$ is a non-trivial element, then  $\Lambda$ is the non-trivial limit of the sequence of
	finitely generated infinite index subgroups
	$(\Lambda*\langle g^n \rangle)_{n\geq 2}$ of $\FF_r$.
		
	This (with Property~\ref{it: G fg f.i. subgroups are isolated}, 
	and Property~~\ref{it: 
		not fg --> not isolated} above respectively) shows  that $\PK(\FF_r)=\Sub_{\infty 
		i}(\FF_r)$ 
	for $r<\infty$ and $\PK(\FF_\infty)=\Sub(\FF_\infty)$.
\end{proof}

\begin{remark}
This 
also shows that the Cantor-Bendixson rank of $\Sub(\FF_r)$  equals $1$ when $r$ is finite and equals $0$ when $r=\infty$.
\end{remark}

Computations of the perfect kernel for some other groups have been performed in 
\cite{BGK-2015-IRS-Lamp,skipperCantorBendixsonRank2020}.

The following is a classical result:
{\em Assume $\Gamma$ acts by homeomorphisms on a Polish space $Z$ and $\nu$ is 
	an ergodic $\Gamma$-invariant probability measure on $Z$, then the orbit of 
	$\nu$-almost every point $z\in Z$ is dense in the support of $\nu$.}
In particular:
\begin{proposition}\label{prop: dense orbit}
	If $\Gamma\acts (X,\mu)$ is a p.m.p.\ ergodic action on a standard 
	probability space, then the stabilizer $\Stab(x)$ of almost every point 
	$x\in X$ 
	has dense $\Gamma$-orbit in the support of the associated IRS $\nu=\Stab_* \mu$ of 
	$\Sub(\Gamma)$.
\end{proposition}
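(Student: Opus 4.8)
The plan is to obtain this as a direct instance of the classical fact recalled immediately above, applied to the Polish $\Gamma$-space $Z = \Sub(\Gamma)$ (with its compact totally disconnected topology and the conjugation action) and the measure $\nu = \Stab_* \mu$. First I would observe that $\Sub(\Gamma)$ is compact metrizable, that $\Gamma$ acts on it by homeomorphisms, and that the stabilizer map $\Stab : X \to \Sub(\Gamma)$ is Borel and $\Gamma$-equivariant; consequently $\nu$ is a $\Gamma$-invariant Borel probability measure on $\Sub(\Gamma)$.

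The one genuine verification needed is that $\nu$ is ergodic, and I would deduce it from equivariance: if $A \subseteq \Sub(\Gamma)$ is a $\Gamma$-invariant Borel set, then $\Stab^{-1}(A)$ is a $\Gamma$-invariant Borel subset of $X$, hence is $\mu$-null or $\mu$-conull by ergodicity of the $\Gamma$-action on $(X,\mu)$; since $\nu(A) = \mu(\Stab^{-1}(A))$, the set $A$ is accordingly $\nu$-null or $\nu$-conull. The classical fact then produces a $\nu$-conull Borel set $S \subseteq \Sub(\Gamma)$ all of whose elements have dense $\Gamma$-orbit in $\supp \nu$; the preimage $\Stab^{-1}(S)$ is $\mu$-conull, and for every $x$ in it the subgroup $\Stab(x)$ lies in $S$, which is exactly the assertion.

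For completeness I would also recall the short proof of the classical fact, since it is what makes the statement run. Fixing a countable basis $(U_n)_{n \in \N}$ of $Z$, for each $n$ with $\nu(U_n) > 0$ the saturation $\bigcup_{\gamma \in \Gamma} \gamma U_n$ is open, $\Gamma$-invariant and of positive $\nu$-measure, hence $\nu$-conull by ergodicity; intersecting these countably many conull sets with $\supp \nu$ yields a conull set $S$. If $z \in S$ and a basic open set $U_n$ meets $\supp \nu$, then $\nu(U_n) > 0$ by definition of the support, so the orbit of $z$ meets $U_n$; since the sets $U_n$ meeting $\supp \nu$ form a basis of the topology of $\supp \nu$, the orbit of $z$ is dense in $\supp \nu$.

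I do not expect a real obstacle: the proposition is essentially a bookkeeping consequence of ergodicity together with the equivariance of the stabilizer map. The only points requiring a little care are the implication ``$\mu$ ergodic $\Rightarrow$ $\Stab_* \mu$ ergodic'' — which uses that $\Stab$ is equivariant, not merely Borel — and, in the classical fact, the observation that a basic open set meeting the support automatically has positive measure, so that density of an orbit need only be tested against such sets.
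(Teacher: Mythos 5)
Your proposal is correct and follows exactly the route the paper takes: the paper derives the proposition as an immediate instance ("In particular") of the classical fact about ergodic invariant measures on Polish $\Gamma$-spaces, applied to $Z=\Sub(\Gamma)$ and $\nu=\Stab_*\mu$, and your verification that $\nu$ is ergodic via equivariance of $\Stab$, together with the standard saturation-of-a-countable-basis argument, correctly fills in the details the paper leaves implicit.
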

Thus, our main theorem
produces IRS's on $\Sub(\FF_r)$ for which almost every $\FF_r$-orbit (under 
conjugation) is dense in $\PK(\FF_r)=\Sub_{\infty i}(\FF_r)$. 
In other words, for almost every subgroup $\Lambda$ the Schreier graph of the 
action $\FF_r \acts \FF_r/\Lambda$ 
contains arbitrarily large copies of  Schreier ball of every infinite 
transitive $\FF_r$-action.
\begin{remark}
	In the introduction, we defined an IRS to be totipotent when almost every 
	subgroup has dense orbit in the perfect kernel. But an IRS can also be 
	considered as a p.m.p.\ dynamical system whose associated IRS can be 
	different. The connections between the two 
	notions of totipotency are unclear to us. Note however that since the 
	actions that we construct are totally non-free, this situation does not happen 
	$\left(\Stabmap^{\FF_r\acts \Sub(\FF_r}\right)_*\left(\Stabmap^{\alpha_\tau}_*\mu\right)=\Stabmap^{\alpha_\tau}_*\mu$
	and our IRS's are totipotent in both senses.
\end{remark}

Moreover, this proposition 
can be combined with \cite[Cor.~4.3]{glasnerUniformlyRecurrentSubgroups2015} 
to 
give the following result.

\begin{theorem}\label{thm: no minimal model}
	Let $\Gamma\acts(X,\mu)$ be a p.m.p.\ ergodic action on a standard 
	probability 
	space. Suppose that 
	the 
	support of the associated IRS contains at least two distinct minimal 
	subsets. Then the action has no minimal model.
\end{theorem}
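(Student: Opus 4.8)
The plan is to reduce the statement to the cited result of Glasner--Weiss \cite[Cor.~4.3]{glasnerUniformlyRecurrentSubgroups2015}, which (in the form we will use) asserts that a minimal action of $\Gamma$ on a compact metrizable space admits at most one minimal invariant closed set that carries an invariant measure --- more precisely, that the support of an ergodic invariant measure for a minimal action is itself minimal, and hence a minimal model of a p.m.p.\ action cannot have an IRS whose support splits into two disjoint nonempty invariant closed subsets. So the first step is to set up the objects cleanly: write $\nu\coloneqq\Stabmap^{\beta}_*\mu$ for the IRS associated to $\Gamma\acts(X,\mu)$, and let $K\coloneqq\supp(\nu)\subseteq\Sub(\Gamma)$, a closed $\Gamma$-invariant subset by equivariance of the stabilizer map and invariance of $\nu$. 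By hypothesis $K$ contains two distinct minimal subsets $M_1\neq M_2$.

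Next I would recall what a minimal model is: a minimal model of $\Gamma\acts(X,\mu)$ is a minimal action $\Gamma\acts Z$ on a compact metrizable space together with an invariant Borel probability measure $\eta$ on $Z$ such that the p.m.p.\ system $(Z,\eta)$ is measurably isomorphic to $(X,\mu)$ as a $\Gamma$-system. The key observation is that the IRS is a measure-isomorphism invariant of the system: if $(Z,\eta)\cong(X,\mu)$ then $\Stabmap^{Z}_*\eta=\nu$, since conjugate actions have the same stabilizer distribution. Then, because $\Gamma\acts Z$ is minimal and $\eta$ is an invariant measure, the pushforward system $\Gamma\acts(\Sub(\Gamma),\nu)$ is a factor of $\Gamma\acts(Z,\eta)$ via the (equivariant, Borel) stabilizer map $\Stabmap^Z$; in particular $\nu$ is the image of an invariant measure under a factor map from a minimal system.

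The heart of the argument is then to apply \cite[Cor.~4.3]{glasnerUniformlyRecurrentSubgroups2015} to derive a contradiction: that corollary tells us that for a minimal $\Gamma\acts Z$, the image measure $\Stabmap^Z_*\eta$ is supported on a single minimal subset of $\Sub(\Gamma)$ (indeed its support is the unique URS attached to the minimal system, which is minimal). Hence $\supp(\nu)=K$ would have to be minimal, contradicting the existence of two distinct minimal subsets $M_1,M_2\subseteq K$ (a minimal set cannot properly contain another minimal set, and it cannot equal two distinct ones). Therefore no minimal model exists. The step I expect to require the most care is matching the precise statement of \cite[Cor.~4.3]{glasnerUniformlyRecurrentSubgroups2015} to the formulation used here: one must check whether that corollary is stated for the stabilizer URS of a minimal action directly, or whether an intermediate lemma (e.g.\ that any invariant measure on a minimal system pushes forward under an equivariant map to a measure whose support is minimal) is needed; I would either quote it in exactly the needed form or insert a one-line bridge, invoking the fact that the support of an invariant measure for a minimal action on a compact space, pushed along an equivariant continuous-on-a-comeager-set map into $\Sub(\Gamma)$, is a minimal set. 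The remaining points --- invariance of $K$, factoriality of the stabilizer map, and invariance of the IRS under measure isomorphism --- are routine and need only be stated.
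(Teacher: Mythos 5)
Your proposal is correct and follows essentially the same route as the paper: identify the IRS as an isomorphism invariant of the p.m.p.\ system and reduce to \cite[Cor.~4.3]{glasnerUniformlyRecurrentSubgroups2015} applied to the hypothetical minimal model. The ``one-line bridge'' you anticipate is exactly what the paper supplies via Proposition~\ref{prop: dense orbit} (by ergodicity, the orbit closure of the stabilizer of almost every point \emph{equals} the support of the IRS), which matches the corollary's formulation in terms of almost-every orbit closures admitting a unique minimal subset; with that bridge in place your argument is the paper's argument.
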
 
This is in wide contrast with free actions  of countable groups: they always admit minimal models \cite{Weiss-12-minmod}.
\begin{proof}
	By the previous proposition, the orbit closure of the stabilizer of 
	$\mu$-almost every point is equal to the support of the IRS, and hence 
	contains two distinct minimal subsets.
	Admitting a minimal model would thus be incompatible with 
	\cite[Cor.~4.3]{glasnerUniformlyRecurrentSubgroups2015}.
\end{proof}

\section{Full groups and density}

We fix once and for all a standard probability space $(X,\mu)$ and denote by 
$\Aut(X,\mu)$ the group of all its measure-preserving transformations, two such 
transformations being identified if they coincide on a full measure set. In 
order to ease notation, we will always neglect what happens on 
null 
sets. Given an element $T\in\Aut(X,\mu)$, its set of fixed points is denoted
\[
\mathrm{Fix}(T)\coloneqq\{x\in X\colon T(x)=x\}.
\]

A \defin{partial isomorphism} of $(X,\mu)$ is a partially defined Borel 
bijection 
$\varphi: \dom \varphi\to\rng\varphi$, with $\dom \varphi, \rng\varphi$ Borel 
subsets of $X$, such that $\varphi$ is measure-preserving for the measures 
induced by $\mu$ on its domain $\dom \varphi$ and its range $\rng\varphi$. In 
particular, we have $\mu(\dom\varphi)=\mu(\rng\varphi)$. The \defin{support} of 
$\varphi$ is the set 
$$\supp \varphi\coloneqq\{x\in \dom \varphi\colon \varphi(x)\neq x\}\cup\{x\in 
\rng\varphi\colon \varphi\inv(x)\neq x\}.$$

Given two partial isomorphisms with $\varphi, \psi$ disjoint domains and 
ranges, one can form their \defin{union}, which is the partial isomorphism 
\begin{align*}
\varphi\sqcup\psi:\dom \varphi\sqcup\dom \psi&\to \rng\varphi\sqcup \rng\psi\\	
x&\mapsto \left\{
\begin{array}{cl}
\varphi(x) &\text{if }x\in \dom\varphi,\\
\psi(x)&\text{if }x\in\dom\psi.
\end{array}
\right.
\end{align*}

A \defin{graphing} is a countable set of partial isomorphisms 
$\Phi$. Its \defin{cost} $\cost(\Phi)$ is the sum of the 
measures of the domains of its elements, which is also equal to the sum of 
the measures of their ranges since they preserve the measure.

Given a graphing $\Phi$, the smallest equivalence relation which contains all 
the graphs of the elements of $\Phi$ is denoted by $\RR _\Phi$ 
and called the \defin{equivalence relation generated} by $\Phi$. When 
$\Phi=\{\varphi\}$, we also write it as $\RR _\varphi$ and 
call it the equivalence relation generated by $\varphi$.

The 
equivalence relations that can be generated by graphings are called  
\defin{p.m.p.\ equivalence 
	relations}, they are Borel as subsets of $X\times X$ and have countable 
classes. The \defin{cost} $\cost(\RR )$ of a p.m.p.\ equivalence relation 
$\RR $ is the infimum of 
the costs of the graphings which generate it.

Whenever $\alpha:\Gamma\to \Aut(X,\mu)$ is a p.m.p.\ action, we denote by 
$\RR ^\alpha$ the equivalence relation generated by $\alpha(\Gamma)$. 

Given a p.m.p.\ equivalence relation $\RR $, the set of partial 
isomorphisms whose graph is contained in $\RR $ is denoted by $[[\RR]]$ and 
called the \defin{pseudo full group} of $\RR $. Here is a useful 
way of obtaining elements of the pseudo full group that we will use 
implicitly. Say that $\RR $ is 
\defin{ergodic} when every Borel $\RR $-saturated set has measure $0$ or 
$1$. Under this assumption, given any two Borel subsets $A, B\subseteq X$ of 
equal 
measure, there is $\varphi\in[[\RR ]]$ such that $\dom \varphi=A$ and 
$\rng\varphi=B$ \cite[Lem.\ 7.10]{kechrisTopicsOrbitEquivalence2004}.

The \defin{full 
	group} of $\RR $ is the subgroup  $[\RR ]$ of $\Aut(X,\mu)$ consisting of 
	almost everywhere defined 
elements 
of the pseudo full group. Endowed with 
the \defin{uniform metric} given by $d_u(S,T)=\mu(\{x\in X\colon S(x)\neq 
T(x)\})$, it becomes a Polish group. Observe that $d_u(T,\id_X)=\mu(\supp T)$.

For more material about this section, we refer to 
\cite{kechrisTopicsOrbitEquivalence2004,gaboriauOrbitEquivalenceMeasured2011} 
and the references therein.

\subsection{Around a theorem of Kittrell-Tsankov}

In this paper, we will be interested in p.m.p.\ actions 
$\tau: \FF_r\to[\RR ]$ 
with dense image in $[\RR ]$. 
To that end, the following result of Kittrell and Tsankov is very useful. Given 
a family $(\RR _i)$ of 
equivalence relations on the same set $X$, we define $\bigvee_{i\in I} \RR_i$ 
as the smallest equivalence relations which contains each $\RR _i$.

\begin{theorem}[{\cite[Thm.~4.7]{kittrellTopologicalPropertiesFull2010}}]
	\label{thm:KT}
	Let $\RR $ be a p.m.p.\ equivalence relation on $(X,\mu)$, suppose 
	that $(\RR _i)_{i\in I}$ is a family of Borel subequivalence 
	relations such that $\RR =\bigvee_{i\in I} \RR _i$. Then 
	$[\RR ]=\overline{\la \bigcup_{i\in I}[\RR _i]\ra}$.
\end{theorem}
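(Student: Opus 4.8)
The inclusion $\overline{\langle\bigcup_{i\in I}[\RR_i]\rangle}\subseteq[\RR]$ is immediate: each $\RR_i\subseteq\RR$ gives $[\RR_i]\subseteq[\RR]$, and $[\RR]$ is closed in $(\Aut(X,\mu),d_u)$. So the whole point is to prove that $G\coloneqq\langle\bigcup_{i\in I}[\RR_i]\rangle$ is dense in $[\RR]$, and my plan is to peel this down to the case of two subequivalence relations and then run an inductive edge-peeling argument.

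First I would reduce the family to a countable, then finite, then two-element one. Being a p.m.p.\ equivalence relation, $\RR$ is generated by countably many partial isomorphisms, which lets one reduce to $I=\N$. Writing $\mathcal S_n\coloneqq\RR_1\vee\cdots\vee\RR_n$, we have $\mathcal S_n\uparrow\RR$; and here I would use the elementary remark that a partial isomorphism whose graph lies in a subequivalence relation $\mathcal S$ sends every $\mathcal S$-invariant set to itself, so that the restriction of any $T\in[\RR]$ to the set $\{x\colon(x,Tx)\in\mathcal S_n\}$ automatically extends to an element of $[\mathcal S_n]$; since the measure of that set tends to $1$, the group $\bigcup_n[\mathcal S_n]$ is already dense in $[\RR]$. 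Combined with an induction on $n$ via $\mathcal S_n=\mathcal S_{n-1}\vee\RR_n$, this reduces the statement to: if $\RR=\RR_1\vee\RR_2$, then $[\RR]=\overline{\langle[\RR_1],[\RR_2]\rangle}$.

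For this core case, the plan is to invoke Feldman-Moore to write $\RR_j$ as the orbit relation of a countable subgroup $G_j\leq[\RR_j]$, so that $\RR=\RR_1\vee\RR_2$ is the orbit relation of the countable group $H\coloneqq\langle G_1\cup G_2\rangle$ and $\RR=\bigcup_{h\in H}\mathrm{graph}(h)$. Given $T\in[\RR]$ and $\varepsilon>0$, this lets us partition a subset of $X$ of measure $>1-\varepsilon$ into finitely many Borel pieces on each of which $T$ coincides with a fixed $h\in H$, all of these $h$'s being words of length $\leq L$ in $G_1\cup G_2$ for one common $L$. It then remains to assemble such a finite patchwork of elements of $H$ into a single element of $\langle[\RR_1],[\RR_2]\rangle$, up to a further set of measure $<\varepsilon$. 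I would do this by induction on $L$: for a.e.\ $x$, choose measurably a minimal $\RR_1\cup\RR_2$-path from $x$ to $Tx$, and cut the domain into countably many --- hence, up to $\varepsilon$, finitely many --- Borel pieces on which the ``first vertex of the path'' map is injective with first edge of a constant type $j\in\{1,2\}$; this exhibits that map as a finite disjoint union of partial isomorphisms, each with graph in $\RR_1$ or in $\RR_2$, each of which extends to a genuine element of the corresponding full group (again because it preserves all invariant sets of that subrelation). Composing $T$ with the resulting element of $\langle[\RR_1],[\RR_2]\rangle$ peels off one $\RR_1\cup\RR_2$-edge from almost every path from $x$ to its image, so after $L$ such rounds one reaches a transformation agreeing with the identity there, and the total error is controlled.

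The main obstacle is precisely this peeling step in the two-relation case: the Lusin-Novikov measurable selection of geodesic paths and of their first edges (available only because the ``first vertex'' map is countable-to-one, its fibres sitting inside a single $\RR_1\cup\RR_2$-class), the need to replace that non-injective map by countably many globally injective Borel pieces and then truncate to finitely many at the cost of $\varepsilon$, and the accounting that keeps the errors accumulated over the $L$ rounds below $\varepsilon$. Everything else --- the reductions, and the passage from a partial isomorphism with graph in some $\RR_j$ to a bona fide element of $[\RR_j]$ --- is soft and needs no ergodicity assumption on $\RR$ or on the $\RR_i$.
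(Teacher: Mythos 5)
First, a point of order: the paper does not prove this statement --- it is quoted verbatim from \cite[Thm.~4.7]{kittrellTopologicalPropertiesFull2010} --- so your proposal has to be measured against the known proof of that theorem rather than against anything in this article. Your preliminary reductions are sound and do follow the standard route: restricting to a countable subfamily, passing to the increasing union $\mathcal S_n=\RR_1\vee\cdots\vee\RR_n$ (the density of $\bigcup_n[\mathcal S_n]$ in $[\RR]$ via the sets $\{x\colon(x,Tx)\in\mathcal S_n\}$ is correct, and the fact you need --- that every element of the pseudo full group $[[\mathcal S]]$ extends to an element of $[\mathcal S]$ with no ergodicity hypothesis --- is true, though it is proved by closing up the orbit segments of the partial isomorphism rather than by the ``invariant sets'' remark you give), and finally reducing to $\RR=\RR_1\vee\RR_2$.

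The gap is in the core peeling step, exactly where you locate ``the main obstacle'': the assembly of the patchwork. After cutting the domain into finitely many pieces $P_1,\dots,P_m$ on which the first-vertex map $F$ is injective with graph in a single $\RR_{j_k}$, you obtain elements $S_k\in[\RR_{j_k}]$ extending $F_{\restriction P_k}$; but there is no single element of $\la[\RR_1],[\RR_2]\ra$ agreeing with $S_k$ on $P_k$ for every $k$ simultaneously ($F$ itself is not even injective, and gluing finitely many elements of a subgroup of $[\RR]$ along a partition generally leaves that subgroup --- compare the measurable patchworks of powers of a single ergodic $T$, which fill up the non-abelian group $[\RR_T]$, whereas $\overline{\la T\ra}$ is abelian). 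So ``composing $T$ with the resulting element'' is not defined, and the induction on $L$ does not launch. The standard repair is to change what is being approximated: one first reduces, via the density of periodic elements in $[\RR]$ and the decomposition of a bounded-period element into transposition-like involutions, to showing that every involution $\hat\varphi$ built from a $\varphi\in[[\RR]]$ with $\dom\varphi\cap\rng\varphi=\emptyset$ lies in the closure; the involutions built from the restrictions of $\varphi$ to disjoint pieces have pairwise disjoint supports, hence commute, and their (possibly infinite) product converges to $\hat\varphi$ in $d_u$ --- this is what allows the countably many Lusin--Novikov pieces to be reassembled --- and only then does one run the induction on the length of the composition $\RR_{j_1}\circ\cdots\circ\RR_{j_L}$ containing the graph of $\varphi$. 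Without some device of this kind (disjointly supported involutions, or a proof that $\overline{\la[\RR_1],[\RR_2]\ra}$ is itself a full group), the argument does not close.
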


We will also use two easy corollaries of their result which require us to set up a bit of notation.

\begin{definition}
	Given an equivalence relation $\RR$ on a set $X$ and $Y\subseteq X$, we 
	define the equivalence relation $\RR_{\restriction Y}$ \defin{restricted} 
	to $Y$ and the equivalence relation $\RR_{\induced Y}$ \defin{induced} on 
	$Y$ by
	\begin{align*}
	\RR_{\restriction Y}&\coloneqq \RR\cap Y\times Y=\{(x,y)\in \RR\colon 
	x,y\in Y\}\subseteq Y\times Y;\\
	\RR_{\induced Y}&\coloneqq \RR_{\restriction Y}\cup \{(x,x)\colon x\in 
	X\}\subseteq X\times X.
	\end{align*}
\end{definition}
Observe that given a p.m.p.\ equivalence relation $\RR $, we have a 
natural way of identifying the full group of the 
restriction $\RR_{\restriction Y}$ with the full group of the induced equivalence 
relation $\RR_{\induced Y}$ by making 
its elements act trivially outside of $Y$.

\begin{corollary}\label{cor: KT for induced}
	Let $\RR $ be an ergodic p.m.p.\ equivalence relation on $(X,\mu)$. Let
	$T\in [\RR]$ and $Y\subseteq X$ measurable such that $\mu(Y\cap T Y)>0$ and 
	put $Y_T\coloneqq \cup_{n\in\Z} T^nY$. Then 
	$\overline{\la T,[\RR_{\induced Y}]\ra}\geq [\RR_{\induced 
	Y_T}]$.
\end{corollary}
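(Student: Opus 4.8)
The goal is to show that the closed subgroup of $[\RR]$ generated by $T$ and $[\RR_{\induced Y}]$ contains the whole full group of the induced relation on the $T$-orbit saturation $Y_T=\bigcup_{n\in\Z}T^nY$. The natural strategy is to realize $\RR_{\induced Y_T}$ as a join of subequivalence relations whose full groups are visibly inside $G\coloneqq\overline{\la T,[\RR_{\induced Y}]\ra}$, and then invoke Theorem~\ref{thm:KT} (Kittrell–Tsankov). First I would set $Y_n\coloneqq T^nY$ for $n\in\Z$; these are measurable subsets of $Y_T$ with $\mu(Y_n\cap Y_{n+1})=\mu(Y\cap T^{-1}Y)>0$ by hypothesis (after translating by $T^{-n-1}$). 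Note that $Y_T=\bigcup_n Y_n$ and, crucially, consecutive pieces $Y_n$ and $Y_{n+1}$ overlap in positive measure. Conjugating $[\RR_{\induced Y}]$ by the element $T^n\in G$ gives $T^n[\RR_{\induced Y}]T^{-n}=[\RR_{\induced Y_n}]\subseteq G$, since conjugating a full group of an induced relation by an element of $[\RR]$ yields the full group of the induced relation on the image set.

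The key step is then to verify that $\RR_{\induced Y_T}=\bigvee_{n\in\Z}\RR_{\induced Y_n}$. The inclusion $\supseteq$ is clear since each $Y_n\subseteq Y_T$. For $\supseteq$ the other way, I would argue that the join relation, call it $\mathcal S$, is ergodic when restricted to $Y_T$: because consecutive overlaps $Y_n\cap Y_{n+1}$ have positive measure and each $\RR_{\restriction Y_n}$ is ergodic (being a measure-isomorphic copy, via $T^n$, of the ergodic relation $\RR_{\restriction Y}$ — here one uses that $\RR$ itself is ergodic so its restriction to the positive-measure set $Y$ is ergodic), a standard "chain of overlapping ergodic pieces" argument shows any $\mathcal S$-invariant subset of $Y_T$ is null or co-null in $Y_T$. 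Once $\mathcal S_{\restriction Y_T}$ is ergodic, it must equal $\RR_{\restriction Y_T}$: indeed $\mathcal S\subseteq\RR_{\restriction Y_T}$ and two ergodic subrelations... no — rather, I would instead directly show every $(x,y)\in\RR_{\restriction Y_T}$ lies in $\mathcal S$ by using that $x,y$ are $\RR$-equivalent, picking a path in $Y_T$, and noting that since the $Y_n$ cover $Y_T$ with positive-measure consecutive overlaps one can (after discarding a null set) connect any two points of the same $\RR$-class within $Y_T$ through finitely many of the $Y_n$'s using ergodicity of each $\RR_{\restriction Y_n}$ to move inside each piece and the overlaps to pass between pieces. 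The cleanest phrasing: the full group $[\mathcal S_{\induced Y_T}]$ acts ergodically and $[\RR_{\induced Y_T}]=\overline{\la\bigcup_n[\RR_{\induced Y_n}]\ra}$ will follow once we know $\mathcal S=\RR_{\induced Y_T}$.

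Assembling: by Theorem~\ref{thm:KT} applied to the family $(\RR_{\induced Y_n})_{n\in\Z}$ with join $\RR_{\induced Y_T}$, we get $[\RR_{\induced Y_T}]=\overline{\la\bigcup_{n\in\Z}[\RR_{\induced Y_n}]\ra}$. Since each $[\RR_{\induced Y_n}]=T^n[\RR_{\induced Y}]T^{-n}\subseteq G$ and $G$ is a closed subgroup, we conclude $[\RR_{\induced Y_T}]\subseteq G$, which is the claim.

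I expect the main obstacle to be the measure-theoretic bookkeeping in establishing $\RR_{\induced Y_T}=\bigvee_n\RR_{\induced Y_n}$ — specifically, confirming ergodicity of the join via the overlapping-chain argument and handling the null sets where $T$ behaves badly on the boundaries of the $Y_n$. Everything else (conjugation identities, the covering $Y_T=\bigcup_n Y_n$, closure of $G$) is routine.
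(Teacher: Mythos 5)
Your proposal is correct and follows essentially the same route as the paper: conjugate $[\RR_{\induced Y}]$ by powers of $T$ to get $[\RR_{\induced T^nY}]$, use ergodicity of $\RR$ together with the positive-measure overlaps $T^nY\cap T^{n+1}Y$ to identify $\RR_{\induced Y_T}$ with the join of the $\RR_{\induced T^nY}$, and conclude by Kittrell--Tsankov. The only (cosmetic) difference is that you apply Theorem~\ref{thm:KT} once to the whole $\Z$-indexed family, whereas the paper applies it to two overlapping pieces at a time and inducts.
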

\begin{proof}	
    Since $\mu(Y\cap TY)>0$ and $\RR$ is ergodic, we have that $\RR_{\induced 
    Y\cup TY}=\RR_{\induced Y}\vee \RR_{\induced TY}$. Therefore Theorem 
    \ref{thm:KT} implies that \[\overline{\la[\RR_{\induced 
    Y}],T[\RR_{\induced Y}]T^{-1}\ra}=[\RR_{Y\cup TY}].\]  
	Now observe that $(Y\cup TY)\cap T(Y\cup TY)\supseteq TY$ has positive 
	measure. Therefore Theorem \ref{thm:KT} implies that  $\overline{\la 
	T,[\RR_{\induced Y}]\ra}$ contains $[\RR_{\induced (Y\cup TY\cup T^2Y)}]$ 
	and the corollary follows by induction. 
 \end{proof}

\begin{corollary}\label{crl: erg+Amu1/2}
  Consider an ergodic p.m.p.\ equivalence relation $\RR$ on $(X,\mu)$ and let 
  $Y\subseteq X$ be a positive measure subset. Let $\alpha$ be a p.m.p.\ action 
  of $\Gamma$ on $(X,\mu)$ such that $\alpha(\Gamma)\leq [\RR]$, 
  $\mu(\alpha(\Gamma)Y)=1$ and $[\RR_{\induced Y}]\leq \overline 
  {\alpha(\Gamma)} $. Then either $\overline{\alpha(\Gamma)}=[\RR]$, or 
$\Gamma$ preserves a finite partition $\{Y_i\}_{i=1}^k$ of $X$, with $Y\subseteq Y_1$ and 
$[\RR_{\induced Y_i}]\leq \overline {\alpha(\Gamma)} $
for each $i\leq k$.

In particular if $\mu(Y)>1/2$, then $k=1$ and hence 
$\overline{\alpha(\Gamma)}=[\RR]$.
\end{corollary}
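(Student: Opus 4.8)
The plan is to deduce this from Corollary~\ref{cor: KT for induced} together with an ergodic-decomposition style argument applied to the $\Gamma$-invariant $\sigma$-algebra. First I would set $\mathcal{B}$ to be the $\sigma$-algebra of $\Gamma$-invariant Borel sets (modulo null sets) and let $Z$ be the $\mathcal{B}$-measurable saturation of $Y$, i.e.\ the smallest $\Gamma$-invariant set containing $Y$ up to measure zero; concretely $Z = \bigcup_{\gamma\in\Gamma}\alpha(\gamma)Y$ already works since $\mu(\alpha(\Gamma)Y)=1$, so in fact $Z = X$ up to null sets. The point of the hypothesis $\mu(\alpha(\Gamma)Y)=1$ is precisely that $Y$ meets every ``$\Gamma$-ergodic component'' positively. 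The key observation is then the following: for any $\Gamma$-invariant Borel set $A$ with $\mu(A\cap Y)>0$, the closure $\overline{\alpha(\Gamma)}$ contains $[\RR_{\induced (A\cap Y_\infty)}]$ where $Y_\infty$ is the $\alpha(\Gamma)$-orbit of $Y$ inside $A$; but since $A$ is $\Gamma$-invariant and $\mu(\alpha(\Gamma)Y)=1$, we get $A\cap Y_\infty = A$ up to null sets, so $\overline{\alpha(\Gamma)} \supseteq [\RR_{\induced A}]$.

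The mechanism for that key observation is an iterated application of Corollary~\ref{cor: KT for induced}: starting from $[\RR_{\induced Y}]\leq\overline{\alpha(\Gamma)}$, for each $\gamma\in\Gamma$ with $\mu(Y\cap\alpha(\gamma)Y)>0$ we enlarge $Y$ to $\bigcup_n\alpha(\gamma)^nY$ while staying inside $\overline{\alpha(\Gamma)}$; iterating over a sequence of generators and their products, and using that the full group of a countable increasing union of induced relations is the closure of the union of the full groups (Theorem~\ref{thm:KT}), we reach $[\RR_{\induced W}]\leq\overline{\alpha(\Gamma)}$ where $W = \bigcup_{\gamma\in\Gamma}\alpha(\gamma)Y$. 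The subtlety is that when two ``pieces'' fail to overlap positively we cannot merge them directly; this is exactly where the $\Gamma$-invariant partition comes from. I would argue that the relation ``$x\sim x'$ iff $x,x'$ lie in a common $[\RR_{\induced W}]$-class obtainable this way'' refines into countably many $\Gamma$-invariant pieces, and ergodicity of $\RR$ forces each such piece's induced full group to sit inside $\overline{\alpha(\Gamma)}$; by a maximality/exhaustion argument (choosing a maximal such partition, which must be finite since the piece containing $Y$ has measure $>0$ and the relevant pieces partition a conull set — or rather: if there were infinitely many, one of measure $<$ any prescribed $\epsilon$, one derives a contradiction with the merging process) one gets a finite $\Gamma$-invariant partition $\{Y_i\}_{i=1}^k$ with $Y\subseteq Y_1$ and $[\RR_{\induced Y_i}]\leq\overline{\alpha(\Gamma)}$ for each $i$. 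If $k=1$, then $[\RR]=[\RR_{\induced X}]\leq\overline{\alpha(\Gamma)}$, and since $\overline{\alpha(\Gamma)}\leq[\RR]$ always, we conclude $\overline{\alpha(\Gamma)}=[\RR]$.

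For the ``in particular'' clause: the $Y_i$ are disjoint and $\Gamma$-invariant with $\mu(\bigcup Y_i)=1$ and $Y\subseteq Y_1$, so $\mu(Y_1)\geq\mu(Y)>1/2$; since the remaining $Y_i$ for $i\geq 2$ would have total measure $<1/2$, but each $Y_i$ with $i\geq 2$ must have the same measure as\ldots\ actually here I would instead note directly that a $\Gamma$-invariant set $Y_1$ of measure $>1/2$ disjoint from the conull union means $\sum_{i\geq2}\mu(Y_i)<1/2<\mu(Y_1)$; this alone does not force $k=1$, so I need the sharper input that $Y\subseteq Y_1$ combined with ergodicity of $\RR$ — namely if $k\geq 2$ then $Y_1$ is a nontrivial $\RR$-invariant\ldots\ no, $Y_1$ need not be $\RR$-invariant. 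The cleanest route: if $\mu(Y)>1/2$ then for \emph{every} $\gamma\in\Gamma$, $\mu(\alpha(\gamma)Y\cap Y)\geq 2\mu(Y)-1>0$, so the merging process via Corollary~\ref{cor: KT for induced} never stalls: at each stage the current set $W$ (which contains $Y$, hence has measure $>1/2$) overlaps $\alpha(\gamma)Y$ positively for all $\gamma$, so we can absorb every $\alpha(\gamma)Y$ and reach $W=X$ up to null sets in a countable number of steps. Hence $[\RR_{\induced X}]=[\RR]\leq\overline{\alpha(\Gamma)}$, giving $k=1$ and $\overline{\alpha(\Gamma)}=[\RR]$.

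The main obstacle I anticipate is making rigorous the exhaustion/maximality argument producing a \emph{finite} $\Gamma$-invariant partition in the general case: one must check that the family of $\Gamma$-invariant sets $A$ with $[\RR_{\induced A}]\leq\overline{\alpha(\Gamma)}$ is closed under the operations needed, that a maximal partition into such sets exists, and crucially that it has finitely many parts — the finiteness should follow because any two parts that are not ``separated'' by the merging obstruction can be combined, so the parts biject with orbits of a certain finite-index-type structure, but spelling this out cleanly (rather than hand-waving with ergodicity of $\RR$) is where the real work lies. An alternative cleaner formulation avoiding explicit partitions would be to phrase the dichotomy directly in terms of whether $\mu(\alpha(\gamma)Y\cap Y)>0$ for all $\gamma$ versus not, but the statement as given demands the partition, so I would commit to the maximality argument and be careful about the countability of the merging steps (using a fixed enumeration of $\Gamma$ and Theorem~\ref{thm:KT} to pass to the limit).
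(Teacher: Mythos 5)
There is a genuine gap, in two places. First, the ``key observation'' in your opening paragraph is false as stated: taking $A=X$ (which is $\Gamma$-invariant and meets $Y$), it would give $\overline{\alpha(\Gamma)}\supseteq[\RR_{\induced X}]=[\RR]$ unconditionally, erasing the very dichotomy the corollary asserts. The merging via Corollary~\ref{cor: KT for induced} can stall whenever the current set $W$ satisfies $\mu(W\cap\alpha(\gamma)W)=0$, and $\Gamma$-invariance of the ambient set $A$ does nothing to prevent this. You acknowledge the issue a few lines later, but the observation itself should be discarded. Second, and more seriously, the heart of the proof --- producing the \emph{finite} $\Gamma$-preserved partition --- is left as an admitted ``maximality/exhaustion argument'' whose finiteness you yourself flag as ``where the real work lies''. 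Note also that the atoms $Y_i$ need not be individually $\Gamma$-invariant (the action permutes them), so the detour through the $\sigma$-algebra of $\Gamma$-invariant sets is not the right frame.

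The missing idea is to apply maximality to a single \emph{set} rather than to a partition. Let $B\supseteq Y$ be of maximal measure with $[\RR_{\induced B}]\leq\overline{\alpha(\Gamma)}$; such a $B$ exists because this class of sets is closed under countable unions (any two of its members contain $Y$, hence overlap positively, so $\RR_{\induced \cup_n B_n}=\bigvee_n\RR_{\induced B_n}$ and Theorem~\ref{thm:KT} applies). If some $\gamma$ had $\alpha(\gamma)B\neq B$ and $\mu(B\cap\alpha(\gamma)B)>0$, Corollary~\ref{cor: KT for induced} with $T=\alpha(\gamma)$ would produce a strictly larger admissible set $B_T$, contradicting maximality. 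Hence each translate $\alpha(\gamma)B$ either equals $B$ or is disjoint from it; the distinct translates are pairwise disjoint, cover $X$ up to a null set because $Y\subseteq B$ and $\mu(\alpha(\Gamma)Y)=1$, and are finite in number because each has measure $\mu(B)\geq\mu(Y)>0$. This is the desired partition, with $Y_1=B$ and $[\RR_{\induced \alpha(\gamma)B}]=\alpha(\gamma)[\RR_{\induced B}]\alpha(\gamma)^{-1}\leq\overline{\alpha(\Gamma)}$. Your argument for the ``in particular'' clause (two sets of measure $>1/2$ must overlap, so the merging never stalls) is correct; with the maximal $B$ in hand it reads even more simply: $\mu(B)\geq\mu(Y)>1/2$ forces all translates to coincide with $B$, so $B=X$.
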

\begin{proof}
  Let $B\supset Y$ be a subset of maximal measure such that $\overline {\alpha(\Gamma)} \geq [\RR_{\induced B}]$. Then by the above corollary for every 
  $\gamma\in \Gamma$  such that $\alpha(\gamma) B\neq B$, we must have that 
  $\mu(B\cap 
  \alpha(\gamma) B)=0$, hence $B$ is an atom of a finite partition preserved by 
  the $\Gamma$-action $\alpha$. 
\end{proof}

\subsection{From graphings to density}

The following is a slight variation of 
\cite[Def.~8]{lemaitrenumbertopologicalgenerators2014}.

\begin{definition}\label{def:pre-cycle}
	Let $n\geq 2$. A \defin{pre-cycle} of \defin{length} $n$ is a partial 
	isomorphism $\varphi$ 
	such that if we set $B\coloneqq \dom\varphi\setminus\rng\varphi$ (the \defin{basis} of the pre-cycle), then 
	$\{\varphi^i(B)\}_{i=0,\ldots,n-2}$ is a partition of $\dom\varphi$ and 
	$\{\varphi^i(B)\}_{i=1,\ldots,n-1}$ is a partition of $\rng\varphi$.
	
	We say that $T\in \Aut(X,\mu)$ \defin{extends} $\varphi$ if $T x=\varphi x$ 
	for every $x\in\dom(\varphi)$.
\end{definition}

Observe that a pre-cycle of length $2$ is an element $\varphi\in [[\RR]]$ such 
that $\dom(\varphi)\cap \rng(\varphi)=\emptyset$. If $\varphi$ is a 
pre-cycle of length $n$, then $\mu(\supp \varphi)=n\mu(B)$ and 
$\mu(\dom\varphi)=(n-1)\mu(B)$.

A \defin{$n$-cycle} is a measure-preserving transformation all of whose orbits have cardinality either $1$ or $n$.
Given a pre-cycle $\varphi$ of length $n$, 
we can extend it to an $n$-cycle 
$U_\varphi\in[\RR _\varphi]$ as follows:
\[U_\varphi(x)\coloneqq\left\{
\begin{array}{cl}
\varphi(x)&\text{if }x\in\dom\varphi;\\
\varphi^{-(n-1)}(x)& \text{if }x\in\rng\varphi\setminus\dom\varphi;\\
x& \text{otherwise.}
\end{array}
\right.\]
This $n$-cycle $U_\varphi$ is called the \defin{closing cycle} of 
 $\varphi$ and $\supp U_\varphi=\supp \varphi$.

\begin{remark}\label{rmk: defequiv}
Note that if $\{\varphi_1,...,\varphi_{n-1}\}$ is a pre-$n$-cycle in the sense 
of \cite[Def.~8]{lemaitrenumbertopologicalgenerators2014}, then 
$\varphi_1\sqcup\cdots\sqcup \varphi_{n-1}$ is a pre-cycle of length $n$ in our 
sense, 
and that if $\varphi$ is a pre-cycle of length $n$ in our sense then 
$\{\varphi_{\restriction \varphi^i(B)}\colon i=0,...,n-2\}$ is a pre-$n$-cycle 
in the sense of \cite[Def.~8]{lemaitrenumbertopologicalgenerators2014}. The 
reason for this change of terminology will become apparent in the statement of 
the next lemma, which was proved for $U=U_\varphi$ in
\cite[Prop.~10]{lemaitrenumbertopologicalgenerators2014}.
\end{remark}

\begin{lemma}\label{lem: conj trick using pre-cycle}
	Suppose $\varphi$ is a pre-cycle of basis $B$,	let 
	$\psi\coloneqq\varphi_{\restriction B}$, and suppose $U\in \Aut(X,\mu)$ 
	extends 
	$\varphi$. Then $[\RR _\varphi]$ is  
	contained in the closure of the group generated by 
	$[\RR _\psi]\cup\{U\}$.
\end{lemma}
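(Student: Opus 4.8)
The plan is to reduce the problem to the case where $U$ is the closing cycle $U_\varphi$, which was handled in \cite[Prop.~10]{lemaitrenumbertopologicalgenerators2014}, using Remark~\ref{rmk: defequiv} to translate between the two notions of pre-cycle. So first I would invoke that proposition (together with the translation in Remark~\ref{rmk: defequiv}) to conclude that $[\RR_\varphi]$ is contained in the closure of the group generated by $[\RR_\psi]\cup\{U_\varphi\}$. It then suffices to show that $U_\varphi$ itself lies in the closure of the group generated by $[\RR_\psi]\cup\{U\}$: indeed, once we know $U_\varphi\in\overline{\la [\RR_\psi]\cup\{U\}\ra}$, we get $\overline{\la [\RR_\psi]\cup\{U_\varphi\}\ra}\subseteq \overline{\la [\RR_\psi]\cup\{U\}\ra}$ and combining with the previous containment finishes the proof.

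Next I would analyze the difference between $U$ and $U_\varphi$. Both extend $\varphi$, so $U$ and $U_\varphi$ agree on $\dom\varphi$; they can only differ on $X\setminus\dom\varphi = (\rng\varphi\setminus\dom\varphi)\sqcup(X\setminus(\dom\varphi\cup\rng\varphi))$. Write $n$ for the length of the pre-cycle and recall $\rng\varphi\setminus\dom\varphi=\varphi^{n-1}(B)$. On $\varphi^{n-1}(B)$ the closing cycle sends $x\mapsto \varphi^{-(n-1)}(x)\in B$, while $U$ sends $\varphi^{n-1}(B)$ somewhere measure-preservingly; off $\supp\varphi$ the closing cycle is the identity. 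The key observation is that $U_\varphi\circ U\inv$ fixes $\dom\varphi\setminus\varphi^{n-1}(B)$ pointwise, hence its support is contained in $X\setminus(\dom\varphi\setminus\varphi^{n-1}(B))$, which by the partition structure of a pre-cycle is a set disjoint from $\bigsqcup_{i=0}^{n-2}\varphi^i(B)$ except for the piece $\varphi^{n-1}(B)$ itself. Actually $U$ need not lie in $[\RR_\varphi]$ at all, so I would instead directly produce $U_\varphi$ as a limit: using ergodicity-free elementary manipulations, I would conjugate $U$ by elements of $[\RR_\psi]$ — which act inside the levels $\varphi^i(B)$ in a controlled way — to replace the ``return'' behavior of $U$ on $\varphi^{n-1}(B)$ by the correct map $\varphi^{-(n-1)}$, at the cost of an error supported on a set that can be made small, or better, exactly reconstruct $U_\varphi$ algebraically.

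A cleaner route, which I would pursue as the main line, is this: observe that $U_\varphi = c\circ U$ where $c\coloneqq U_\varphi\circ U\inv$ satisfies $c\circ U|_{\dom\varphi}=U_\varphi|_{\dom\varphi}=U|_{\dom\varphi}$, so $c$ is the identity on $U(\dom\varphi)=\varphi(\dom\varphi)=\bigsqcup_{i=1}^{n-1}\varphi^i(B)=\rng\varphi$. Thus $\supp c\subseteq X\setminus\rng\varphi$. In particular $c$ fixes $\varphi^i(B)$ pointwise for $i=1,\dots,n-1$ and can only move points of $B$ and of $X\setminus\supp\varphi$. But a pre-cycle of length $n$ has all its nontrivial dynamics inside $\supp\varphi=\bigsqcup_{i=0}^{n-1}\varphi^i(B)$; I would then show that the relevant restriction of $c$ to $B$ — namely the map that must be ``corrected'' — is realized by an element of $[\RR_\psi]$, since $\psi=\varphi|_B$ and powers of $U$ conjugate $B$ to the other levels, letting us move any rearrangement of $B$ through the tower. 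The main obstacle is precisely this bookkeeping: controlling exactly where $U$ and $U_\varphi$ differ on the top level $\varphi^{n-1}(B)$ and on $X\setminus\supp\varphi$, and checking that the correction term, once pulled back to the basis $B$, can be generated inside $\overline{\la [\RR_\psi]\cup\{U\}\ra}$ — concretely, that conjugating $[\RR_\psi]$-elements by powers of $U$ produces enough partial isomorphisms supported on the $\varphi^i(B)$ to both absorb the discrepancy and recover $[\RR_\varphi]=[\RR_\psi\vee\{\text{cycle}\}]$ via Theorem~\ref{thm:KT}. Once that is in hand, one more application of Theorem~\ref{thm:KT} to the family consisting of $\RR_\psi$ and the cycle relation assembles $[\RR_\varphi]$ inside the closure, completing the argument.
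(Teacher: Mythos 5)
Your reduction to the closing cycle is legitimate (the paper's Remark~\ref{rmk: defequiv} indeed identifies \cite[Prop.~10]{lemaitrenumbertopologicalgenerators2014} with the case $U=U_\varphi$), but the step you then need --- that $U_\varphi$ lies in $\overline{\la [\RR_\psi]\cup\{U\}\ra}$ --- is exactly where the proof is missing, and the mechanism you propose for it does not work. You correctly compute that $c\coloneqq U_\varphi U\inv$ is the identity on $\rng\varphi$ (your earlier claim that it fixes $\dom\varphi\setminus\varphi^{n-1}(B)$ is wrong: $c$ need not fix $B$). But that is all one can say: since $U$ is an \emph{arbitrary} element of $\Aut(X,\mu)$ extending $\varphi$, its restriction to $X\setminus\dom\varphi=\varphi^{n-1}(B)\sqcup(X\setminus\supp\varphi)$ is an arbitrary measure-preserving bijection onto $X\setminus\rng\varphi=B\sqcup(X\setminus\supp\varphi)$, so $c$ is an essentially arbitrary measure-preserving permutation of $B\sqcup(X\setminus\supp\varphi)$, which may mix $B$ with $X\setminus\supp\varphi$. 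In particular there is no ``restriction of $c$ to $B$'' in general, and even when there is, it cannot be ``realized by an element of $[\RR_\psi]$'': the classes of $\RR_\psi$ are the pairs $\{x,\psi(x)\}$ with $\psi(x)\in\varphi(B)$ disjoint from $B$, so the only element of $[\RR_\psi]$ supported in $B$ is the identity. Conjugating $[\RR_\psi]$ by powers of $U$ only produces maps following the graph of $\varphi$ between consecutive levels, never an arbitrary rearrangement of $B$. The only way $c$ ends up in the closed group is a posteriori, as $U_\varphi U\inv$ once $U_\varphi$ is already known to be there --- so this route is circular. Your final appeal to Theorem~\ref{thm:KT} with ``$\RR_\psi$ and the cycle relation'' has the same problem, since the cycle relation \emph{is} $\RR_\varphi$.

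The fix is to drop $U_\varphi$ and the correction term entirely and argue directly, which is what the paper does: set $\psi_i\coloneqq\varphi_{\restriction\varphi^i(B)}$ for $i=0,\dots,n-2$, so that $\RR_\varphi=\bigvee_{i=0}^{n-2}\RR_{\psi_i}$. Because $U$ extends $\varphi$ and $\varphi^{i+1}(B)\subseteq\dom\varphi$ for $i\leq n-3$, one checks $U\psi_iU\inv=\psi_{i+1}$, hence $[\RR_{\psi_{i+1}}]=U[\RR_{\psi_i}]U\inv$; starting from $\psi_0=\psi$ this puts every $[\RR_{\psi_i}]$ inside $\la[\RR_\psi]\cup\{U\}\ra$, and a single application of Theorem~\ref{thm:KT} to the family $(\RR_{\psi_i})_i$ gives $[\RR_\varphi]$ in the closure. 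Note that this argument never uses where $U$ sends $X\setminus\dom\varphi$, which is precisely the information your correction-term approach would have to control.
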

\begin{proof}
	Let $n$ be the length of $\varphi$. For $i=0,...,n-2$ let 
	$\psi_i=\varphi_{\restriction \varphi^i(B)}$, then we 
	have $\RR _\varphi=\bigvee_{i=0}^{n-2}\RR _{\psi_i}$. Since $U$ 
	extends $\varphi$, we have $U\psi_iU\inv=\psi_{i+1}$ for all $i=0,...,n-3$, 
	and hence $U[\RR _{\psi_i}]U\inv=[\RR _{\psi_{i+1}}]$.
	Since $\psi_0=\psi$, the group generated by $U\cup 
	[\RR _\psi]$ contains $[\RR _{\psi_i}]$ for all $i=0,...,n-2$. 
	Theorem \ref{thm:KT} finishes the proof.
\end{proof}

	The following proposition is obtained by a slight modification of the 
	proof of the main theorem of 
	\cite{lemaitrenumbertopologicalgenerators2014}.

\begin{proposition}\label{prop: the good pre-cycles}
	Let $\RR$ be a p.m.p.\ ergodic equivalence relation on $X$ and let 
	$Y\subseteq X$ 
	be a positive measure subset. Let $\RR_0\leq \RR_{\induced Y}$ be a 
	hyperfinite equivalence relation whose restriction to $Y$ is ergodic (and trivial on $X\setminus Y$). 
	Suppose that 
	$\cost(\RR_{\induced Y})<r\mu(Y)$ for some integer $r\geq 2$. 
		Then there are $r-1$ pre-cycles $\varphi_2,\varphi_3,\ldots,\varphi_r\in [[\RR_{\induced Y}]]$  
		such that 
	$\mu(\supp(\varphi_i))<\mu(Y)$ and such that whenever $U_2,U_3,\ldots,U_r\in 
	[\RR]$ 
	extend $\varphi_2,\varphi_3,\ldots,\varphi_r$, we have  
	$\overline{\la[\RR_0],U_2,U_3,\ldots,U_r\ra}\geq [\RR_{\induced Y}]$.
\end{proposition}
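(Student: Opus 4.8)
The plan is to follow the strategy of \cite{lemaitrenumbertopologicalgenerators2014}, keeping careful track of supports so that everything stays inside $[[\RR_{\induced Y}]]$ and each pre-cycle has support of measure $<\mu(Y)$. First I would fix a graphing $\Phi=\{\phi_1,\dots,\phi_k\}$ of $\RR_{\induced Y}$ (with all $\phi_j$ supported in $Y$) whose total cost is $<r\mu(Y)$; by adding a graphing of the hyperfinite ergodic $\RR_0$ at no cost to the conclusion (since $[\RR_0]$ is already among the generators), I may assume $\RR_\Phi\vee\RR_0=\RR_{\induced Y}$ and in fact, using that $\RR_0$ is ergodic on $Y$, replace each $\phi_j$ by a \emph{pre-cycle} $\varphi_j'$ whose basis is contained in a fixed $\RR_0$-fundamental-type configuration: concretely, one writes each partial isomorphism $\phi_j$ as a union of ``small'' pieces which can be threaded into long pre-cycles using the $\RR_0$-structure, so that $\RR_{\varphi_j'}\vee\RR_0$ recovers $\RR_{\phi_j}\vee\RR_0$. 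The point of using long pre-cycles is that a pre-cycle of length $n$ and support measure $s$ has basis of measure $s/n$, hence domain of measure $s(n-1)/n$; so by subdividing cleverly one trades a graphing of cost $c<r\mu(Y)$ for $r-1$ pre-cycles whose supports are each $<\mu(Y)$ — this is exactly the cost bookkeeping of \cite{lemaitrenumbertopologicalgenerators2014}, and the integer $r\geq 2$ together with $c<r\mu(Y)$ is what makes it possible to split into $r-1$ parts each of controlled size.

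Next, the key mechanism is Lemma~\ref{lem: conj trick using pre-cycle}: once $\varphi_i$ is a pre-cycle with basis $B_i$ and $\psi_i\coloneqq(\varphi_i)_{\restriction B_i}$, any extension $U_i\in[\RR]$ of $\varphi_i$ satisfies $[\RR_{\varphi_i}]\subseteq\overline{\la [\RR_{\psi_i}],U_i\ra}$. So it suffices to arrange that the bases $B_i$ are chosen so that $[\RR_0]$ together with the $[\RR_{\psi_i}]$ already generate (a dense subgroup of) $[\RR_{\induced Y}]$ — equivalently $\RR_0\vee\bigvee_i \RR_{\psi_i}=\RR_{\induced Y}$ — and then invoke Theorem~\ref{thm:KT} to pass from the join of the $\RR_{\varphi_i}$ and $\RR_0$ to $[\RR_{\induced Y}]$. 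The role of $\RR_0$ is twofold: its ergodicity on $Y$ gives enough room to perform the pre-cycle surgery (realizing arbitrary measure-matched pieces inside $Y$ by elements of $[[\RR_0]]$, via \cite[Lem.~7.10]{kechrisTopicsOrbitEquivalence2004}), and it absorbs the ``base'' relations so that the $\psi_i$ only need to contribute the ``new'' directions of $\Phi$ modulo $\RR_0$.

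Concretely I would argue as follows. Choose a generating graphing of $\RR_{\induced Y}$ of the form $\RR_0\cup\{\phi_1,\dots,\phi_m\}$ with $\sum_j\mu(\dom\phi_j)=c-\cost(\RR_0)<r\mu(Y)-\cost(\RR_0)$ — but since $\RR_0$ is hyperfinite its cost contribution can be absorbed and we just need $m$ partial isomorphisms of total domain measure $<(r-1)\mu(Y)$ once we account for one ``free'' generator's worth coming from $\RR_0$ (this is precisely why the bound is $r\mu(Y)$ and not $(r-1)\mu(Y)$). Cut and regroup the $\phi_j$ into $r-1$ partial isomorphisms $\eta_2,\dots,\eta_r$ each with $\mu(\dom\eta_i)<\mu(Y)$ and $\RR_0\vee\bigvee_i\RR_{\eta_i}=\RR_{\induced Y}$; since $\RR_0$ is ergodic on $Y$ and each $\eta_i$ is supported in $Y$, for any target length $n_i$ large enough that $\frac{n_i}{n_i-1}\mu(\dom\eta_i)<\mu(Y)$ we can find, using ergodicity of $\RR_0$ on $Y$, an element of $[[\RR_0]]$ implementing a pre-cycle $\varphi_i$ of length $n_i$, with basis $B_i$ chosen inside a set on which $\eta_i$ acts, such that $(\varphi_i)_{\restriction B_i}$ together with $\RR_0$ generates $\RR_{\eta_i}\vee\RR_0$ (spread the graph of $\eta_i$ across the fibers $\varphi_i^j(B_i)$ and use $\RR_0$ to glue the fibers). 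Finally $\mu(\supp\varphi_i)=\frac{n_i}{n_i-1}\mu(\dom\eta_i)<\mu(Y)$ as required, and for any extensions $U_i$, Lemma~\ref{lem: conj trick using pre-cycle} gives $[\RR_{\varphi_i}]\subseteq\overline{\la[\RR_{\psi_i}],U_i\ra}\subseteq\overline{\la[\RR_0],U_2,\dots,U_r\ra}$ (since $\RR_{\psi_i}\subseteq\RR_0\vee(\text{something already present})$ — more precisely $\psi_i$ is chosen inside $[[\RR_0\vee\RR_{\eta_i}]]$, which by induction on $i$ is covered), whence by Theorem~\ref{thm:KT} the closed group generated by $[\RR_0]$ and the $U_i$ contains $[\RR_0\vee\bigvee_i\RR_{\varphi_i}]=[\RR_{\induced Y}]$.

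The main obstacle I anticipate is the regrouping-and-threading step: one must simultaneously (a) keep each pre-cycle's support strictly below $\mu(Y)$, which forces the lengths $n_i$ to be large and hence the basis sets $B_i$ small, (b) ensure the bases and their iterates $\varphi_i^j(B_i)$ fit disjointly where needed inside $Y$ while $\varphi_i$ genuinely extends a piece that still carries all of $\RR_{\eta_i}$ modulo $\RR_0$, and (c) make the whole collection, together with $\RR_0$, still join up to $\RR_{\induced Y}$. This is the delicate combinatorial-measure-theoretic bookkeeping that mirrors the proof of the main theorem of \cite{lemaitrenumbertopologicalgenerators2014}; the new feature here is that everything is localized to $Y$ and the auxiliary hyperfinite $\RR_0$ plays the role of the ``ambient ergodic scaffolding'', so the argument is a relativized version of \emph{loc.\ cit.} with $(X,\mu)$ replaced by $(Y,\mu_{\restriction Y}/\mu(Y))$ and $r$ unchanged.
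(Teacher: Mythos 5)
Your overall route is the same as the paper's (reduce to a graphing of cost $<(r-1)\mu(Y)$ that, together with $\RR_0$, generates $\RR_{\induced Y}$; cut and paste it into $r-1$ long pre-cycles of small support; finish with Lemma \ref{lem: conj trick using pre-cycle} and Theorem \ref{thm:KT}), but two steps are genuinely problematic as written. First, the passage to a graphing of cost $<(r-1)\mu(Y)$ ``modulo $\RR_0$'' is not an absorption of the cost of a hyperfinite relation: cost is an infimum over \emph{all} generating graphings, and nothing formal lets you trade $\mu(Y)$ worth of an arbitrary cheap graphing of $\RR_{\induced Y}$ for a single ergodic $T\in[\RR_0]$. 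This is precisely Lemma III.5 of \cite{Gab00}, a genuine theorem about cost, which the paper invokes at exactly this point and which your sketch asserts without justification.

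Second, and more seriously, your condition on the basis restrictions $\psi_i=(\varphi_i)_{\restriction B_i}$ is circular. Lemma \ref{lem: conj trick using pre-cycle} gives $[\RR_{\varphi_i}]\leq\overline{\la[\RR_{\psi_i}],U_i\ra}$, so to use it you must already know that $[\RR_{\psi_i}]$ lies in the closed group generated by $[\RR_0]$ and the $U_j$. You propose to take $\psi_i\in[[\RR_0\vee\RR_{\eta_i}]]$, but $\RR_{\eta_i}$ for the \emph{same} index $i$ is exactly what is not yet available, and ``induction on $i$'' does not supply it; your other stated requirement, $\RR_0\vee\bigvee_i\RR_{\psi_i}=\RR_{\induced Y}$, points in the wrong direction entirely (what is needed is that the $\psi_i$ be already \emph{generated}, not that they generate; if they did generate, the $U_i$ would be superfluous). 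The fix --- and the paper's actual construction --- is to choose a single length-$2$ pre-cycle $\psi\in[[\RR_0]]$ with domain $B$ of suitably small measure and to arrange, by cutting, pasting and conjugating by elements of $[\RR_0]$, that \emph{all} $r-1$ pre-cycles have the common basis $B$ and extend this same $\psi$, the content of the original graphing being carried entirely by the iterates $\varphi_i^j(B)$ for $j\geq 1$. Then $[\RR_\psi]\leq[\RR_0]$ holds for free, $\mu(\supp\varphi_i)<\mu(Y)$ follows from taking the common length large enough, and $\RR_0\vee\RR_{\varphi_2}\vee\cdots\vee\RR_{\varphi_r}=\RR_{\induced Y}$, after which Theorem \ref{thm:KT} concludes as you indicate.
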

For instance, one can take $U_2,U_3,\ldots,U_r$ to be the closing cycles of 
$\varphi_2,\varphi_3,\ldots,\varphi_r$.

\begin{proof}

	Let $T\in [\RR_0]$ be such that its restriction to $Y$ is ergodic. 
	Our assumption $\cost(\RR_{\induced Y})<r\mu(Y)$ means that the normalized cost of
	the restriction $\RR_{\restriction Y}$ is less than $r$.
	Lemma 
	III.5 from 
	\cite{Gab00} then provides a graphing
	$\Phi$  on $Y$ of normalized cost $<(r-1)$ such that $\{T_{\restriction Y}\}\cup\Phi$ generates 
	the restriction $\RR_{\restriction Y}$. We now view $\Phi$ as a graphing on $X$, so that 
	$\{T\}\cup\Phi$
	generates $\RR_{\induced Y}$, and $\cost(\Phi)<(r-1)\mu(Y)$.
	Let $c\coloneqq \cost(\Phi)/(r-1)<\mu(Y)$. We take $p\in\N$ so large that 
	$c(p+2)/p<\mu(Y)$.
	
	Pick $\psi\in[[\RR_0]]$ a pre-cycle of length $2$ whose domain $B$ has 
	measure $c/p$. By cutting and pasting the elements of 
	$\Phi$ and by conjugating them by elements of $[\RR_0]$, we may as well 
	assume that $\Phi=\{\varphi_2,...,\varphi_r\}$ 
	where each $\varphi_i$ is a pre-cycle of length $p+2$ extending $\psi$ of 
	basis $B$ whose 
	support is a strict subset of $Y$. 
	Assume that $U_i\in [\RR]$ extends $\varphi_i$ for every $i=2,3,\ldots,r$. 
	Since $\psi\in [[\RR_0]]$, then $[\RR_\psi]\leq [\RR_0]$. We can apply 
	Lemma \ref{lem: conj trick using 
		pre-cycle} and obtain that the closure of the group generated by 
		$[\RR_0]$ and $U_i$ contains $[\RR_{\varphi_i}]$.
	Since  
	$\RR_{\induced Y}=\RR _{0}\vee\RR 
	_{\varphi_2}\vee\cdots\vee\RR_{\varphi_r}$, we can
	conclude the proof of the theorem using Theorem \ref{thm:KT}.
\end{proof}

\begin{remark}
Observe that we have a lot of freedom in the construction of the pre-cycles $\varphi_2,\varphi_3,\ldots,\varphi_r$ of Proposition~\ref{prop: the good pre-cycles}.
To start with, their length can be chosen to be any integer $n=p+2$ large enough that $\frac{c}{\mu(Y)}< \frac{n-2}{n}$.
Actually, they could even have been chosen with any (possibly different) lengths $n_2, n_3,\cdots, n_r$, large enough integers so that 
$\frac{c}{\mu(Y)}< \frac{n_j-2}{n_j}$:
simply pick $r-1$ pre-cycles $\psi_j\in[[\RR_0]]$ of length $2$ whose domain $B_j$ has 
	measure $\frac{c}{n_j-2}$ and proceed as in the proof above.
	
	In particular, the periodic closing cycles $U_2,U_3, \cdots, U_r$ can be assumed to have any large enough period $n_2, n_3, \cdots, n_r$ and domains contained in $Y$ of measure $<\mu(Y)$.
	Up to conjugating by elements of $[\RR_0]$, one can further assume the closing cycles have in $Y$ a non-null common subset of fixed points: $\mu(\mathrm{Fix}(U_2)\cap \mathrm{Fix}(U_3)\cap\cdots\cap\mathrm{Fix}(U_r)\cap Y)>0$.
\end{remark}

\section{Evanescent pairs and topological generators}

In this section our main goal is to obtain two topological generators of the 
full group of a 
hyperfinite 
ergodic equivalence 
relation with new flexibility properties relying on the following definition.

\begin{definition}
\label{def: evanescent pair}
A pair $(T,V)$ of elements of the full group $[\RR]$ of the 
p.m.p.\ equivalence relation $\RR$ is called an \defin{evanescent pair of 
topological generators} of $\RR$ if 
\begin{enumerate}
\item $V$ is periodic; and
\item for 
	every 
	$n\in\N$, the full group $[\RR]$ is topologically generated by the conjugates of $V^n$ by the
	powers of $T$, i.e., $\overline{\langle T^j V^n T^{-j}:{j\in \Z}\rangle}=[\RR]$.
\end{enumerate}
\end{definition}
In particular, if $(T,V)$ is an evanescent pair of topological generators, then 
the following hold: 
\begin{itemize}
\item  the pair $(T,V)$ topologically generates $[\RR]$,
\item $(T,V^n)$ is an evanescent pair of topological generators  for any $n\in 
\N$,
\item $d_u(V^{n!},\id_X)$ tends to $0$ when $n$ tends to $\infty$.
\end{itemize}

We will show in Theorem~\ref{Th: existence of evanescent pair} that the 
odometer $T_0$ 
can be completed to form an evanescent pair $(T_0,V)$ of topological generators for 
$\RR_{T_0}$, and that  the set of possible $V$ is actually a 
dense 
$G_\delta$. 

In this section, we set $X=\{0,1\}^\N$ endowed with the Bernoulli $1/2$ 
measure 
$\mu=(\frac12\delta_0+\frac12\delta_1)^{\otimes\N}$. 
Given $s\in \{0,1\}^n$, we define the basic clopen set $$N_s\coloneqq\{x\in 
\{0,1\}^\N\colon
x_i=s_i\text{ for } 1\leq i\leq n\}.$$
The \defin{odometer} $T_0$ 
is the measure-preserving transformation of this space defined as adding the binary 
sequence $(1,0,0,\cdots)$ with carry to the right. More precisely, for each 
sequences $x\in\{0,1\}^\N$, if $k$ is the (possibly infinite) 
first integer such that $x_k=0$, then  $y=T_0(x)$ is defined by
\[
y_n\coloneqq\left\{
\begin{array}{cl}
0&\text{if }n< k,\\
1&\text{if }n= k,\\
x_n&\text{if }n>k.
\end{array}
\right.
\]

For each $n\in\N$, the permutation group $\Sym(\{0,1\}^n)$ has a natural action $\alpha_n$ on
$\{0,1\}^\N\simeq\{0,1\}^n\times\{0,1\}^\N$ given 
for $x\in\{0,1\}^\N$ and $\sigma\in\Sym(\{0,1\}^n)$ by:
$$
\alpha_n(\sigma) (x_1,...,x_n, 
x_{n+1},...)\coloneqq(\sigma(x_1,...,x_n),x_{n+1},...).
$$

The sequence $(\alpha_n(\Sym(\{0,1\}^n)))_{n\in\N}$ is an increasing sequence 
of subgroups of the full group $[\RR_{T_0}]$ whose reunion is dense in 
$[\RR_{T_0}]$ (see 
\cite[Prop.~3.8]{kechrisGlobalaspectsergodic2010}).

We now define a sequence of involutions 
$U_n\in[\RR_{T_0}]$ 
with disjoint supports as in 
\cite[Sect.~4.2]{lemaitrefullgroupsnonergodic2016}: 
$U_n\coloneqq\alpha_n(\upsilon_n)$ 
where $\upsilon_n\in\Sym(\{0,1\}^n)$ is the $2$-points support transposition 
that exchanges $0^{n-1}1$ 
and $1^{n-1}0$.
Observe that $U_n$ is
the involution with support $N_{1^{n-1}0}\sqcup N_{0^{n-1}1}$ (of measure $2^{-n+1}$)  which is equal to 
$T_0$ on $N_{1^{n-1}0}$ and $T_0\inv$ on $N_{0^{n-1}1}$.

Recall that if $\tau_n\in\Sym(\{0,1\}^n)$ is $2^n$-cycle and $w_n$ is a 
transposition which exchanges two $\tau_n$-consecutive elements, then 
the group $\Sym(\{0,1\}^n)$ is generated by the conjugates of $w_n$ by 
powers of $\tau_n$ (actually $2^n-1$ of them are enough).  
A straightforward modification gives the following (see \cite[Lem.\ 
4.3]{lemaitrefullgroupsnonergodic2016} for a detailed proof):

\begin{lemma}\label{lem: conjugacy generation for T0}
	For every $n\in\N$, the group $\alpha_n(\Sym(\{0,1\}^n))$ is contained in 
	the group generated by the conjugates of $U_n$ by powers of $T_0$. 
\end{lemma}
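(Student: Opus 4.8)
The plan is to reduce the statement to the classical fact about symmetric groups recalled just before it, via the identification of $\{0,1\}^n$ with $\Z/2^n\Z$ through $s\mapsto\sum_{i=1}^n s_i 2^{i-1}$ (so that $x_1$ is the least significant bit, matching the direction in which the odometer carries). Under this identification the permutation $\upsilon_n$ is the transposition of the two consecutive elements $2^{n-1}-1$ (the word $1^{n-1}0$) and $2^{n-1}$ (the word $0^{n-1}1$), while the $2^n$-cycle $c_n\coloneqq(0\;1\;\cdots\;2^n-1)$ of $\{0,1\}^n$ is the permutation induced by $T_0$ on the first $n$ coordinates. Applying the recalled fact with $\tau_n=c_n$ and $w_n=\upsilon_n$, and noting that the $2^n$ conjugates $c_n^{\,j}\upsilon_n c_n^{-j}$ ($j\in\Z/2^n$) are exactly the transpositions of cyclically consecutive pairs — which form a cycle graph on $\{0,\dots,2^n-1\}$ — we get that already the $2^n-1$ of them with $j\not\equiv 2^{n-1}\pmod{2^n}$ suffice to generate $\Sym(\{0,1\}^n)$ (they correspond to the non-wrap-around adjacent transpositions $(k,k+1)$, $0\le k\le 2^n-2$).

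The heart of the argument is then the identity, valid for every $j\in\Z$ with $j\not\equiv 2^{n-1}\pmod{2^n}$,
\[
T_0^{\,j}\,U_n\,T_0^{-j}=\alpha_n\!\bigl(c_n^{\,j}\upsilon_n c_n^{-j}\bigr).
\]
To prove it, write a point of $\{0,1\}^\N$ as $(m,\ell)$, where $m\in\{0,\dots,2^n-1\}$ is its first $n$ coordinates read as an integer and $\ell$ is the tail. Then $T_0^{-j}$ subtracts $j$, possibly borrowing from $\ell$; the involution $U_n$ is supported on $\{m=2^{n-1}-1\}\sqcup\{m=2^{n-1}\}$ and there swaps these two values of $m$ without touching $\ell$; and $T_0^{\,j}$ adds $j$ back, possibly carrying into $\ell$. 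A direct computation shows that when $j\not\equiv 2^{n-1}$ the borrow created by $T_0^{-j}$ is exactly undone by the carry of $T_0^{\,j}$, so the composite fixes $\ell$ and acts on $m$ precisely by the transposition $c_n^{\,j}\upsilon_n c_n^{-j}$; the excluded residue $j\equiv 2^{n-1}$ is the one for which $T_0^{-j}$ carries one of the cylinders $N_{1^{n-1}0}$, $N_{0^{n-1}1}$ across the boundary $N_{1^n}\leftrightarrow N_{0^n}$, where $T_0$ genuinely moves the tail. This bookkeeping of borrows against carries is the only delicate point; everything else is formal.

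Combining the two steps: the group generated by $\{T_0^{\,j}U_nT_0^{-j}:j\in\Z\}$ contains all $\alpha_n\bigl(c_n^{\,j}\upsilon_n c_n^{-j}\bigr)$ with $j\not\equiv 2^{n-1}\pmod{2^n}$, hence — since $\alpha_n$ is an injective homomorphism — it contains $\alpha_n\bigl(\langle c_n^{\,j}\upsilon_n c_n^{-j}:j\not\equiv 2^{n-1}\rangle\bigr)=\alpha_n(\Sym(\{0,1\}^n))$, as claimed. This is precisely the announced ``straightforward modification'' of \cite[Lem.~4.3]{lemaitrefullgroupsnonergodic2016}, where the odometer $T_0$ plays the role of the finitely supported transformation used there.
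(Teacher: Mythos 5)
Your proof is correct and follows exactly the route the paper indicates (and delegates to \cite[Lem.~4.3]{lemaitrefullgroupsnonergodic2016}): identify $\{0,1\}^n$ with $\Z/2^n\Z$, check that $T_0^{\,j}U_nT_0^{-j}=\alpha_n(c_n^{\,j}\upsilon_nc_n^{-j})$ for $j\not\equiv 2^{n-1}\pmod{2^n}$ because borrows and carries cancel off that residue, and invoke the classical generation of $\Sym$ by adjacent transpositions along a path. The only delicate point — excluding the single residue where the wrap-around makes the conjugate move the tail — is correctly identified and handled.
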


Given a periodic p.m.p.\ transformation $U$ 
and $k\in\N$, we say that $V$ is a \defin{$k$th root} of $U$ when $\supp 
U=\supp V$ and 
$V^{k}=U$. 
The following lemma is well-known.  

\begin{lemma}
	Whenever $\RR $ is an ergodic equivalence relation, 
	every periodic element in $[\RR ]$ admits a $k$th root in 
	$[\RR]$.
\end{lemma}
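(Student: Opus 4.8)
The plan is to reduce to the case where $U$ is an $n$-cycle on its support and then exhibit the $k$th root explicitly, building it as a cyclic ``bundling'' of $U$-orbits into groups of $k$.

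First I would decompose $\supp U=\bigsqcup_{n\geq 2}A_n$, where $A_n$ is the Borel, $U$-invariant set of points whose $U$-orbit has exactly $n$ elements, so that $U_{\restriction A_n}$ is an $n$-cycle and $U$ is the identity on $X\setminus\supp U$. It suffices to produce, for each $n\geq 2$ with $\mu(A_n)>0$, a fixed-point-free measure-preserving bijection $V_n$ of $A_n$ with $V_n(x)\mathrel{\RR}x$ for all $x\in A_n$ and $V_n^k=U_{\restriction A_n}$; then $V\coloneqq\id_{X\setminus\supp U}\sqcup\bigsqcup_{n\geq 2}V_n$ lies in $[\RR]$ and satisfies $\supp V=\bigsqcup_{n\geq 2}A_n=\supp U$ and $V^k=U$.

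Fix $n$. Choose a Borel fundamental domain $B$ for $U_{\restriction A_n}$, so that $A_n=\bigsqcup_{i=0}^{n-1}U^i(B)$, and partition $B$ into $k$ Borel pieces $B^0,\dots,B^{k-1}$ of equal measure. Using ergodicity of $\RR$ together with \cite[Lem.\ 7.10]{kechrisTopicsOrbitEquivalence2004} (equal-measure subsets of $X$ are matched by an element of $[[\RR]]$), pick $\gamma_j\in[[\RR]]$ with $\dom\gamma_j=B^j$ and $\rng\gamma_j=B^{j+1}$ for $j=0,\dots,k-2$; note that $\dom\gamma_j$ and $\rng\gamma_j$ lie inside $A_n$ and that the graph of $\gamma_j$ lies in $\RR$ (we do not need $\RR_{\restriction A_n}$ to be ergodic). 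Put $\Gamma_0\coloneqq\id_{B^0}$ and $\Gamma_j\coloneqq\gamma_{j-1}\circ\cdots\circ\gamma_0\colon B^0\to B^j$; then every point of $A_n$ is uniquely of the form $U^i\Gamma_j(b)$ with $b\in B^0$, $0\leq i\leq n-1$, $0\leq j\leq k-1$. Define $V_n$ by $V_n\bigl(U^i\Gamma_j(b)\bigr)\coloneqq U^i\Gamma_{j+1}(b)$ if $j<k-1$, and $V_n\bigl(U^i\Gamma_{k-1}(b)\bigr)\coloneqq U^{i+1}\Gamma_0(b)$ (indices of $U$ read modulo $n$, since $U^n=\id$ on $A_n$). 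On each piece $U^i(B^j)$ this map is a power of $U$ pre- and post-composed with some $\gamma_j$ (or with $\Gamma_{k-1}^{-1}$ when $j=k-1$), hence a partial isomorphism with graph in $\RR$; the images of these pieces again tile $A_n$, so $V_n$ is a measure-preserving bijection of $A_n$ with $V_n(x)\mathrel{\RR}x$ throughout. In the $(j,i)$-coordinates $V_n$ is the length-$nk$ cyclic permutation $(j,i)\mapsto(j+1,i)$ for $j<k-1$ and $(k-1,i)\mapsto(0,i+1)$; iterating it $k$ times restores the value of $j$ and advances $i$ by one, whence $V_n^k\bigl(U^i\Gamma_j(b)\bigr)=U^{i+1}\Gamma_j(b)=U\bigl(U^i\Gamma_j(b)\bigr)$, i.e.\ $V_n^k=U_{\restriction A_n}$. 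Finally $V_n$ has no fixed point: for $j<k-1$ it sends $B^j$ into the disjoint set $B^{j+1}$, while for $j=k-1$ it sends $U^i(B^{k-1})$ into $U^{i+1}(B^0)$, and $U^i(B)\cap U^{i+1}(B)=\emptyset$ because $B$ is a fundamental domain and $n\geq 2$. Assembling the $V_n$ as above completes the construction.

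The only genuinely non-formal point is the case $\gcd(k,n)>1$: then $U_{\restriction A_n}$ admits no $k$th root preserving individual $U$-orbits, which is precisely why one bundles $k$ orbits together, making $V_n$ a single $nk$-cycle on each bundle — and the $k$th power of an $nk$-cycle is a union of $k$ disjoint $n$-cycles. Ergodicity of $\RR$ is used only to realize the bundling maps $\gamma_j$ inside $[[\RR]]$; the remaining verifications (that the pieces tile $A_n$, that $V_n$ preserves $\mu$ and is Borel) are routine bookkeeping.
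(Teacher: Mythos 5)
Your proposal is correct and is essentially the paper's own argument: the paper likewise reduces to the sets where $U$ is an $n$-cycle, uses ergodicity to equip a fundamental domain with a $k$-fold structure (there, a $k$-cycle $V$ on the fundamental domain, which is the same data as your equal-measure pieces $B^0,\dots,B^{k-1}$ linked by the $\gamma_j$), and interleaves it with $U$ to produce an $nk$-cycle whose $k$th power is $U$. Your write-up just makes the coordinates and the verification $V_n^k=U_{\restriction A_n}$ more explicit.
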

\begin{proof}
	Let us first prove that every $n$-cycle $U\in [\RR ]$ admits a $k$th 
	root. To this end, 
	pick a fundamental domain $A$ for the restriction of $U$ to its support. 
	Since $\RR$ is ergodic, we can pick a $k$-cycle $V\in[\RR ]$ supported on 
	$A$. Let $B$ be a 
	fundamental domain for $V$, and put $C\coloneqq A\setminus B$. Then it is 
	straightforward to 
	check that  $W\in[\RR ]$ defined as follows is a $k$th root of $U$: 
	$$
	W(x)\coloneqq\left\{
	\begin{array}{cl}
	UU^iVU^{-i}(x) &\text{if } x\in U^i(B), \\
	U^iVU^{-i}(x)& \text{if } x\in U^i(C),\\
	x&\text{otherwise.}
	\end{array}
	\right.
	$$
	In the general case, one glues together the $k$th roots obtained for every 
	$n\in\N$ by considering the 
	restrictions of $U$ to $U$-orbits of cardinality $n$.
\end{proof}
\begin{remark}
	The same proof works more generally for \emph{aperiodic} p.m.p.\ 
	equivalence relations.
\end{remark}

\begin{theorem}\label{Th: existence of evanescent pair}
	The set of $V\in[\RR_{T_0}]$  such that $(T_0,V)$ is an evanescent pair of 
	topological generators of $\RR_{T_0}$ is 
	a dense $G_\delta$ subset of $[\RR_{T_0}]$. 
\end{theorem}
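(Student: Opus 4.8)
The plan is to exhibit the desired set of $V$'s as a countable intersection of open dense sets. For each $n\in\N$, set
\[
G_n\coloneqq\Bigl\{V\in[\RR_{T_0}]\colon \overline{\la T_0^j V^n T_0^{-j}\colon j\in\Z\ra}=[\RR_{T_0}]\Bigr\},
\]
and also let $P$ be the (closed) set of periodic elements; then the target set is $P\cap\bigcap_{n\in\N}G_n$. Actually, to get a $G_\delta$ it is cleaner to first build, for a fixed periodic ``skeleton'', a candidate family: I would fix a sequence $(V_n)$ where $V_n$ is a $2^{n+1}$-th root of $U_n$ (roots exist by the lemma just proved, since $\RR_{T_0}$ is ergodic), note that $U_1,U_2,\dots$ have pairwise disjoint supports of measure $2^{-n+1}$, and consider transformations of the form $V=\bigsqcup_{n} V_n'$ obtained by suitably re-rooting/perturbing on the tail; the point is that the set of such $V$ is a $G_\delta$ on which periodicity is automatic. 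However the slicker route, which I would follow, is: show each $G_n$ is open and dense in $[\RR_{T_0}]$, show the set $E$ of evanescent partners of $T_0$ is exactly $\bigcap_n G_n$ intersected with the periodic elements, and handle periodicity by working inside the Polish subspace obtained as a limit.

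\textbf{Openness.} For openness of $G_n$: if $\overline{\la T_0^j V^n T_0^{-j}\ra}=[\RR_{T_0}]$ then by Theorem~\ref{thm:KT}-type density the group already approximates a fixed dense countable family within $\varepsilon$ using finitely many words in the generators; since $W\mapsto T_0^j W^n T_0^{-j}$ is $d_u$-continuous and multiplication/inversion are $d_u$-continuous, any $V'$ close enough to $V$ still has its conjugates approximating that same finite family, hence $G_n$ is open. This is routine once one writes ``topologically generates'' as a countable union of open conditions.

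\textbf{Density} is the main obstacle. Here I would use the structure recalled before the theorem: the groups $\alpha_m(\Sym(\{0,1\}^m))$ increase to a dense subgroup of $[\RR_{T_0}]$, and by Lemma~\ref{lem: conjugacy generation for T0}, $\alpha_m(\Sym(\{0,1\}^m))$ lies in the group generated by the conjugates of $U_m$ by powers of $T_0$. So it suffices to perturb an arbitrary $V$ slightly to some $V'$ such that, for the given $n$, and for every $m$ up to some large bound, $V'^n$ has a power (or the conjugacy class under $T_0$ of $V'^n$ generates something containing) $U_m$ — equivalently, one wants $V'$ to contain, on a set disjoint from $\supp V$ outside a small error, a piece that is a $(2^{m-1}\cdot n)$-cycle along $T_0$-orbits whose $n$-th power is $U_m$ on the two relevant cylinders. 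Concretely: given $\varepsilon>0$ and a target $m_0$, choose $m_1>m_0$ with $\sum_{m\ge m_1}2^{-m+1}<\varepsilon$; on $\supp V$ restricted to the ``tail'' $\bigsqcup_{m\ge m_1}(\text{stuff})$ redefine $V$ to be a disjoint union of $n$-th roots $V_m$ of $U_m$ with $\supp V_m=\supp U_m$ for $m_1\le m\le m_0'$ (using ergodicity of $\RR_{T_0}$ to build these roots, as in the root lemma, inside the $T_0$-orbit structure), leaving $V$ unchanged elsewhere; this changes $V$ by at most $\varepsilon$ in $d_u$. Then $V'^n\restriction\supp U_m=U_m$ for all $m$ in the chosen range, so by Lemma~\ref{lem: conjugacy generation for T0} the group generated by the $T_0$-conjugates of $V'^n$ contains $\alpha_m(\Sym(\{0,1\}^m))$ for all those $m$, hence its closure contains $\bigcup_m\alpha_m(\Sym(\{0,1\}^m))$ which is dense; so $V'\in G_n$. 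Letting $m_0\to\infty$ shows $G_n$ is dense. The technical care needed — and the real obstacle — is arranging the roots $V_m$ to have support \emph{exactly} $\supp U_m=N_{1^{m-1}0}\sqcup N_{0^{m-1}1}$ and to be built from the $T_0$-dynamics so that they genuinely lie in $[\RR_{T_0}]$, while keeping their supports pairwise disjoint and disjoint from the unperturbed part of $V$; the root lemma proved above gives exactly the needed gadget (a $k$-th root on a prescribed finite orbit union), applied with $k=n$ to each $U_m$ separately, so this is manageable but must be spelled out.

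\textbf{Conclusion.} By the Baire category theorem in the Polish group $[\RR_{T_0}]$, $\bigcap_{n\in\N}G_n$ is a dense $G_\delta$. It remains to see that membership in this intersection, together with condition (1) (periodicity), is both necessary and sufficient for $(T_0,V)$ to be an evanescent pair; sufficiency is immediate from the definitions, and for the $G_\delta$ claim one observes that the perturbations constructed in the density step can be taken periodic (the $V_m$ are roots of periodic $U_m$ with the same support, hence periodic, and off the tail $V$ is left alone), so in fact the set of \emph{periodic} $V$ with $(T_0,V)$ evanescent is itself dense $G_\delta$: intersect with the dense $G_\delta$ set of periodic-or-approximable elements, or more directly note periodicity is a $G_\delta$ condition (periodic $=\bigcap_k\bigcup_{m}\{V:d_u(V^{m!},\id)<1/k\}$-type description) that is respected by the construction. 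Thus the set of $V$ making $(T_0,V)$ an evanescent pair of topological generators of $\RR_{T_0}$ is a dense $G_\delta$ subset of $[\RR_{T_0}]$, as claimed.
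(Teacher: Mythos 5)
Your overall architecture (express the target set as the periodic elements intersected with a countable family of ``generation'' sets, show each is $G_\delta$ and dense, apply Baire) is the paper's, but your density step has a genuine gap. You perturb an \emph{arbitrary} $V$ so that the new $V'$ restricts on $\supp U_m$ to an $n$th root of $U_m$, and then invoke Lemma \ref{lem: conjugacy generation for T0} to conclude that the $T_0$-conjugates of $V'^n$ generate $\alpha_m(\Sym(\{0,1\}^m))$. But that lemma concerns conjugates of $U_m$ itself, whereas $V'^n$ equals $U_m$ only on $\supp U_m$ and equals $V^n$ on the unperturbed part, which for general $V$ is a nontrivial transformation. Nothing in the group generated by $T_0$-conjugates of such a product lets you isolate $U_m$, so ``$V'\in G_n$'' does not follow. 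The paper's proof is built precisely to avoid this: it first reduces, via Rokhlin, to approximating $U$ of some \emph{finite order} $K$; it replaces $U$ on the whole $U$-saturation $A$ of $\supp U_p$ by a $(Km)$th root of $U_p$, so that $\tilde U^{Km}$ is \emph{globally} equal to $U_p$ (the unperturbed part dies because $U^{Km}=\id$ there); and it then uses the containment $\mathcal E_{Km,p}\subseteq \mathcal E_{m,p}$ (conjugates of $V^{Km}$ are powers of conjugates of $V^m$). Without the reduction to finite order and the passage from exponent $m$ to exponent $Km$, your perturbation does not land in $G_n$. Two further points: taking the $U$-saturation is also what makes the modified map a well-defined bijection, an issue you flag but do not resolve; and handling a whole range of $m$'s at once makes $V'^n$ a product of several $U_m$'s, from which the lemma again cannot extract each factor --- the paper avoids this by asking each $\mathcal E_{m,n}$ to capture a single $\alpha_n(\Sym(\{0,1\}^n))$, for which one $U_p$ with $p\geq n$ suffices.

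Your openness claim for $G_n$ is also incorrect: topological generation is a countable \emph{intersection} of open conditions (approximate each element of a fixed countable dense family to each precision $1/p$), not a countable union, and your argument only shows that finitely many of these conditions survive a small perturbation of $V$. Each $G_n$ is only $G_\delta$, which is all the Baire argument needs, so this is repairable. Similarly, ``the (closed) set of periodic elements'' is not closed --- it is dense and proper --- but it is dense $G_\delta$, exactly as the paper records, and intersecting two dense $G_\delta$ sets via Baire is the clean way to finish, rather than arguing that your perturbations ``respect periodicity'' (they do not, since the unperturbed part of $V$ is arbitrary).
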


\begin{proof}
Denote by $\mathcal P$ the set of periodic elements of $[\RR_{T_0}]$. It is a 
direct consequence of Rokhlin's lemma that $\mathcal P$ is dense 
in $[\RR_{T_0}]$.
And similarly the subset $\mathcal P'\subseteq \mathcal P$ of $V\in[\RR_{T_0}]$ 
with finite order 
(or equivalently, with bounded orbit size) is dense in $[\RR_{T_0}]$.

Writing $\mathcal P$ as the intersection (over the positive integers $q$) of 
the open 
sets $\{V\in [\RR_{T_0}]\colon \exists p\in \N,\ d(V^{p!},\id_X)<1/q\}$ shows 
that 
$\mathcal P$ is a $G_\delta$ subset of $[\RR_{T_0}]$.

Denote by $\mathcal E$ the set of $V\in[\RR_{T_0}]$ such that for every $n$, 
the 
group $[\RR_{T_0}]$ is topologically generated by conjugates of $V^n$ by 
powers of $T_0$. We want to show that $\mathcal P\cap \mathcal E$ is dense 
$G_\delta$, and since $\mathcal P$ is dense $G_\delta$ it suffices (by the 
Baire category theorem in the Polish group $[\RR_{T_0}]$) to show that 
$\mathcal E$ is dense $G_\delta$. 

 For every $m,n\in\N$, set 
 \[
 \mathcal E_{m,n}\coloneqq\left\{V\in [\RR_{T_0}]\colon
 \alpha_n(\Sym(\{0,1\}^n))\leq\overline{\la T_0^kV^mT_0^{-k}\colon k\in\Z \ra}
 \right\}.
 \] 
 The density of the union of the  
 $\alpha_n(\Sym(\{0,1\}^n))$ in $[\RR_{T_0}]$ recalled above implies that 
 $\mathcal 
E=\bigcap_{m,n\in\N}\mathcal E_{m,n}$. So it suffices to show that each 
$\mathcal 
E_{m,n}$ is dense $G_\delta$.

Let us first check that each $\mathcal E_{m,n}$ is $G_\delta$. Denote by 
$\mathbf W$ 
the subgroup of $\FF_2=\la a_1,a_2\ra$ generated by the conjugates of $a_2$ 
by powers of $a_1$. So for $w=w(a_1,a_2)\in \mathbf W$ and $V\in[\RR_{T_0}]$, 
the element $w(T_0, V^m)$  is a product of conjugates of 
$V^m$ by powers of $T_0$.
By the definition of the closure we can write $\mathcal 
E_{m,n}$ as 
$$\mathcal E_{m,n}=\bigcap_{p\in\N}\ \bigcap_{\sigma\in 
\Sym(\{0,1\}^n)}\ \bigcup_{w\in \mathbf W}\left\{V\in [\RR_{T_0}]\colon 
d_u(w(T_0,V^m),\sigma)<\frac 1p\right\}.
$$

Since the 
map $V\mapsto w(T_0,V)$ is continuous, each of the above right-hand sets is 
open, so 
their reunion over $w\in \mathbf W$ is also open, and we conclude that 
$\mathcal 
E_{m,n}$ is 
$G_\delta$.

To check the density, it suffices to show that, for each $m,n$, one can approximate 
arbitrary elements of $\mathcal P'$ by elements of $\mathcal E_{m,n}$. So let 
$U\in 
\mathcal P'$ and let $\epsilon>0$. Denote by $K$ the order of $U$. Pick $p\geq 
n$ such that $2^{-p}K<\epsilon/2$. Let $A$ be the 
$U$-saturation of the support of $U_p=\alpha_p(\upsilon_p)$ (defined at the beginning of the section). The measure of $A$ is at most $\epsilon$. Finally, 
let $V$ be a $(Km)$th root of $U_p$ and define 
$$
\tilde U(x)\coloneqq\left\{
\begin{array}{cl}
U(x) & \text{if }x\in X\setminus A,\\
V(x) & \text{if } x\in A.
\end{array}
\right.
$$
 By 
construction  $d_u(U,\tilde U)\leq\mu(A)<\epsilon$.
Observe that $\tilde U^{Km}=(\tilde U^m)^K=U_p$, thus Lemma \ref{lem: 
conjugacy generation for T0} yields that $\tilde U \in \mathcal E_{Km,p}\subseteq \mathcal E_{m,p}$.
Since $p\geq n$, $\tilde U \in \mathcal E_{m,p}\subseteq
\mathcal E_{m,n}$,
 so we are done.
\end{proof}

Let us make a few comments on the above result. 
First, one can check that the pair $(T_0,V)$  produced in the construction of 
\cite[Thm.\ 4.2]{lemaitrefullgroupsnonergodic2016} provides an explicit example 
of an 
evanescent pair of topological generators of $\RR_{T_0}$. Also,
the above proof can be adapted to show that any \emph{rank one}  p.m.p.\ 
ergodic 
transformation \cite[Sect.\ 8]{ORW-1982} can be completed to form 
an evanescent pair of topological generators (see \cite[Thm. 
5.28]{lemaitreGroupesPleinsPreservant2014} 
for an 
explicit example of a pair which is evanescent). Proving these results is beyond the scope 
of this 
paper, so we leave it as an exercise for the 
interested reader.

 It is unclear whether every 
	p.m.p.\ ergodic transformations can be completed to form an evanescent pair 
	of 
	topological generators for its full group. 	
	Nevertheless, we can characterize the existence of an evanescent pair as 
	follows.
	
	\begin{theorem}\label{thm: chara evanescent}
		Let $\RR $ be an ergodic p.m.p.\ equivalence relation, then 
		$\RR$ admits an evanescent pair of topological generators if 
		and only if $\RR $ has cost~$1$. 
	\end{theorem}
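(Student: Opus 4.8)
The plan is to prove both implications. For the easy direction, suppose $\RR$ admits an evanescent pair of topological generators $(T,V)$. Then $\RR$ is topologically generated by the two elements $T$ and $V$, hence by \cite[Thm.~4.7]{kittrellTopologicalPropertiesFull2010} (Theorem~\ref{thm:KT}) the equivalence relation $\RR$ is generated by $\RR_T\vee\RR_V$, which is the orbit equivalence relation of a $\Z^2$-action (in fact a $\Z\times\Z/k\Z$-action since $V$ is periodic). Such an equivalence relation has cost $1$, and since $\RR$ is moreover generated by this hyperfinite subrelation together with nothing extra, $\cost(\RR)\le 1$; as $\RR$ is an ergodic p.m.p.\ equivalence relation it is not finite, so $\cost(\RR)\ge 1$, giving $\cost(\RR)=1$. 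Actually one gets directly that $\RR$ is hyperfinite here, which is stronger, but cost $1$ is what we need. More carefully: a periodic $V$ means $\RR_V$ is smooth, so $\RR_T\vee\RR_V$ is generated by $T$ together with a graphing of cost $0$ (a smooth equivalence relation has cost $0$ beyond... in fact cost of a finite-class relation equals $1-\mathbb E(1/|\text{class}|)<1$); either way $\cost(\RR)\le\cost(\RR_T)+0=1$.

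For the hard direction, suppose $\RR$ has cost $1$. By a theorem of Hjorth (building on Gaboriau's work), every ergodic p.m.p.\ equivalence relation of cost $1$ is \emph{treeable with a generating graphing approximable by hyperfinite pieces} — more precisely, I would use that an ergodic cost $1$ relation $\RR$ can be written as an increasing union $\RR=\bigcup_n \RR_n$ of hyperfinite ergodic subequivalence relations, or better, that $\RR$ is generated by a single ergodic hyperfinite subequivalence relation $\RR_0$ together with a graphing of arbitrarily small cost. The strategy is then to transport the construction of Theorem~\ref{Th: existence of evanescent pair}: realize $\RR_0$ as (a copy of) the odometer relation $\RR_{T_0}$, take an evanescent pair $(T_0,V_0)$ for $\RR_0$, and then \emph{absorb} the remaining small-cost graphing into the evanescent structure. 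Concretely, using Proposition~\ref{prop: the good pre-cycles} with $Y=X$ and $r=2$ (legitimate since $\cost(\RR)<2\mu(X)=2$), one gets a single pre-cycle $\varphi_2$ with $\mu(\supp\varphi_2)<1$ such that for any $U_2\in[\RR]$ extending $\varphi_2$ we have $\overline{\langle [\RR_0],U_2\rangle}=[\RR]$; moreover the remark after that proposition lets us take the closing cycle $U_2$ periodic with any large period and fixing a positive-measure subset in common with the $U_n$'s appearing in the odometer construction.

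The main obstacle, and the heart of the proof, is to simultaneously arrange that (i) the conjugates of $V^n$ by powers of $T$ topologically generate $[\RR_0]$ for every $n$ (the evanescence requirement from the odometer side), and (ii) these same conjugates also produce the extra transformation $U_2$ needed to climb from $[\RR_0]$ up to $[\RR]$. I would handle this by building a single periodic $V$ whose support decomposes as a disjoint union over a Rokhlin-type tower: on one part $V$ restricts to a root of the $U_n$'s governing $[\RR_0]$ exactly as in Theorem~\ref{Th: existence of evanescent pair}, and on a complementary part (carried along by powers of $T$) $V$ restricts to a root of the pre-cycle data building $U_2$, with periods chosen compatibly (all dividing a common $N$) so that passing to powers $V^n$ never destroys either generation property. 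The genericity/$G_\delta$ bookkeeping is then exactly as in the proof of Theorem~\ref{Th: existence of evanescent pair}: write $\mathcal E$ as a countable intersection of dense open sets, where now the target group one must approximate inside is $[\RR]$ rather than $[\RR_{T_0}]$, and density is witnessed by the explicit $\tilde U$ built above together with Lemma~\ref{lem: conj trick using pre-cycle} and Proposition~\ref{prop: the good pre-cycles}. The delicate point to check is that the pre-cycle $\varphi_2$ can be chosen \emph{compatibly with the odometer tower structure} so that its closing cycle's conjugates under $T_0$ stay inside the groups one controls — this is precisely the role played by evanescent pairs advertised in the introduction, and I expect it to require the flexibility in periods and fixed-point sets recorded in the remark following Proposition~\ref{prop: the good pre-cycles}.
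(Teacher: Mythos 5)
Your forward (``easy'') direction contains a genuine error. The relation $\RR_T\vee\RR_V$ is generated by the group $\langle T,V\rangle$, which is a quotient of the free product $\Z*\Z/k\Z$, not of $\Z\times\Z/k\Z$: $T$ and $V$ need not commute, so this join is not the orbit relation of a $\Z\times\Z/k\Z$-action and need not be hyperfinite or of cost $1$ (a free p.m.p.\ action of $\Z*\Z/2\Z$ has cost $3/2$). Likewise a periodic $V\neq\id_X$ does not contribute cost $0$: as your own parenthetical records, $\cost(\RR_V)>0$, so the bound you actually get from topological generation by $(T,V)$ alone is $\cost(\RR)\le 1+\cost(\RR_V)$, which is strictly larger than $1$. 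The missing idea is exactly the point of the ``for every $n$'' clause in Definition~\ref{def: evanescent pair}: for each $n$ the pair $(T,V^{n!})$ still topologically generates $[\RR]$, hence the graphing $\{T,V^{n!}\}$ generates $\RR$ and $\cost(\RR)\le 1+\mu(\supp V^{n!})$; since $V$ is periodic, $\mu(\supp V^{n!})\to 0$, giving $\cost(\RR)\le 1$ (and $\ge 1$ by ergodicity). Without letting $n\to\infty$ this direction does not close.

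Your converse direction follows the paper's strategy in outline: fix a conjugate $T$ of the odometer via Dye's theorem, note that the set $\mathcal E_T$ of $V$ that are ``evanescent relative to $[\RR_T]$'' is dense $G_\delta$ in $[\RR]$, and then show that each set $\mathcal E_n$ of periodic $V$ with $\overline{\langle T^jV^nT^{-j}\colon j\in\Z\rangle}=[\RR]$ is dense $G_\delta$, concluding by Baire. What your sketch leaves vague is the one concrete mechanism that makes the density of $\mathcal E_n$ work. Given $V\in\mathcal E_T$ with $\mu(\supp V)<1$, the paper uses cost $1$ to produce a pre-cycle $\varphi$ of small support \emph{disjoint from} $\supp V$, extends it to a pre-cycle of length $3$ with closing $3$-cycle $U_1$, takes an $n$th root $U_2$ of $U_1$, and sets $V_2=VU_2$. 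Disjointness of supports gives $V_2^n=U_1V^n$ and hence $(V_2^n)^3=V^{3n}$, so the conjugates of $V_2^n$ by powers of $T$ already recover $[\RR_T]$; then a single conjugation by $V_2^n$ (which acts as $U_1$ on $\supp U_1$) bootstraps $[\RR_\psi]\le[\RR_T]$ up to $[\RR_\varphi]$ via Lemma~\ref{lem: conj trick using pre-cycle}, and Theorem~\ref{thm:KT} finishes. This bypasses your ``periods dividing a common $N$'' and ``tower-compatibility'' concerns entirely: no compatibility between the added cycle and the odometer tower is needed beyond disjointness of supports and finiteness of the order of $U_1$, and your proposed appeal to Proposition~\ref{prop: the good pre-cycles} with $Y=X$, $r=2$ is workable but must still be supplemented by this root-and-power trick to see that \emph{powers} of the perturbed $V$ retain both generation properties.
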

	\begin{proof}
		If $\RR $ admits an evanescent pair $(T, V)$, then since $V$ is 
		periodic we have $\mu(\supp V^{n!})\to 0$. Since any set of topological 
		generators 
		for 
		$[\RR ]$ generates the equivalence relation $\RR $, we 
		conclude that $\RR $ has cost $1$. 
		
		As for the converse, 
		Theorem 4 and 5 from \cite{dyeGroupsMeasurePreserving1959} provide an 
		ergodic hyperfinite 
		subequivalence relation which is isomorphic to that of the
		odometer. So we can pick a conjugate of the odometer 
		$T\in[\RR]$. Repeating the proof of Theorem \ref{Th: existence of 
		evanescent 
		pair}, 
		we see that the set $\mathcal E_{T}$ of $V\in [\RR ]$ such that 
		for 
		every $n\in\N$, $[\RR_{T}]$ is \emph{contained} in $\overline{\langle 
		T^j 
		V^n 
			T^{-j}:{j\in \Z}\rangle}$ is dense $G_\delta$ in $[\RR ]$.

		Let us now consider the set $\mathcal E_{\RR}$ of $V\in[\RR ]$ such 
		that  
		$(T,V)$ is an evanescent pair of topological generators of $\RR$, 
		and for $n\in\N$ the set $\mathcal E_n$ of $V\in [\RR ]$ such 
		that 
		$V$ is periodic and $\overline{\langle T^j V^n T^{-j}:{j\in 
				\Z}\rangle}=[\RR]$. Each $\mathcal E_n$ is $G_\delta$ by the same 
				argument as 
		in the proof of Theorem \ref{Th: existence of evanescent pair}. Since 
		$\mathcal 
		E_{\RR}=\bigcap_n \mathcal E_n$, it suffices to show that each $\mathcal E_n$ 
		is 
		dense in order to apply the Baire category theorem and finish the 
		proof. 
		
		Let us fix $n\in\N$. Since $\mathcal E_{T}$ is dense in $[\RR]$, we 
		only need to 
		approximate elements of $\mathcal E_{T}$ by elements of $\mathcal 
		E_n$. Moreover, the set of $V\in [\RR ]$ such that $\mu(\supp 
		V)<1$ 
		is open and dense, so we only need to approximate every $V\in \mathcal 
		E_T$ with $\mu(\supp V)<1$ by elements of $\mathcal E_n$.

		So let $V\in \mathcal E_T$ with $\mu(\supp V)<1$, and take 
		$\epsilon>0$. 
		
		Since $\RR $ has cost $1$, Lemma III.5 from \cite{Gab00} yields a 
		graphing $\Phi$ of cost 
		$<\frac 13\min(\epsilon,\mu(X\setminus \supp V))$ such that 
		$\{T\}\cup\Phi$ 
		generates $\RR $. Conjugating by elements of $[\RR_{T}]$ and pasting 
		the elements of 
		$\Phi$, we may as well assume that $\Phi=\{\varphi\}$ where 
		$\mu(\dom\varphi)<\epsilon/3$ and $\varphi$ is a pre-cycle of length 
		$2$ whose 
		support disjoint from 
		$\supp 
		V$. We then pick $\psi\in[[\RR_{T}]]$ such that $\varphi\sqcup \psi$ 
		is a 
		pre-cycle of length $3$ of support disjoint from $\supp V$, and denote 
		by $U_1$ the 
		associated $3$-cycle. 
		
		Now let $U_2$ be an $n$th root of $U_1$, let $V_2\coloneqq VU_2$, then 
		$d_u(V_2,V)<\epsilon$ and we claim that $V_2$ belongs 
		to $\mathcal E_n$. In order to prove this, let us denote by $G$ the 
		closed 
		group generated by the conjugates of $V_2^n$ by powers of $T$.
		
		Since $U_2$ and $V$ have disjoint support, they commute, and so 
		$V_2^n=U_2^nV^n=U_1V^n$. So $(V_2^n)^3=V^{3n}$, and since 
		$V\in \mathcal E_T$, we have that $[\RR_{T}]\leq G$. In particular 
		$[\RR _{\psi}]\leq G$,
		 and conjugating by $V_2^n$ (which acts as 
		$U_1$ on $\supp U_1$), we get that $[\RR _\varphi]\leq G$ (see also 
		Lemma \ref{lem: conj trick using pre-cycle}). 
		Since $\RR =\RR _{T}\vee \RR _{\psi}$, we conclude 
		by 
		Theorem \ref{thm:KT} that 
		$G$ contains $[\RR ]$ as wanted.
\end{proof}

\section{Proof of the main theorem}
\label{sect: Proof of the main theorem}

As shown in Proposition \ref{rem: perfect kernel for free 
groups}, the perfect kernel of $\Sub(\FF_r)$, $1<r<\infty$, is the space of infinite 
index 
subgroups. We will construct a p.m.p.\ action of $\FF_r$ 
for which almost every Schreier graph contains all possible balls of Schreier graphs 
of transitive $\FF_r$-actions on infinite sets. 

\begin{step}[using a smaller subset]\label{step: smaller set}
We start with a p.m.p.\ ergodic equivalence relation $\RR $ on $(X,\mu)$ 
of cost $<r$. By the induction formula \cite[Proposition II.6]{Gab00}, there is 
a subset $Y\subseteq X$ such that $1/2<\mu(Y)<1$ and 
such that the (normalized) cost of the restriction $\RR _{\restriction 
Y}$ is still $<r$.  
Thus the cost of the induced equivalence relation $\RR _{\induced Y}$ is $<r\mu(Y)$.

Using results of Dye \cite[Thm.\ 4 and 5]{dyeGroupsMeasurePreserving1959} as in 
the proof of Theorem \ref{thm: chara evanescent}, 
one can pick a conjugate of the odometer 
	$T\in [\RR _{\restriction Y}]$. We view $T$ as an element of $[\RR 
	_{\induced Y}]$.

Now we apply Proposition \ref{prop: the good pre-cycles} (where $\RR_0=\RR_T$) 
to obtain pre-cycles 
$\varphi_2,\ldots,\varphi_r\in [[\RR_{\restriction Y}]]$ whose supports have 
measure $<\mu(Y)$. For $i\leq r$, we let 
$U_i$ be the closing cycle of $\varphi_i$ as defined after 
Definition \ref{def:pre-cycle}. 
Set $\eta\coloneqq \mu(Y\setminus \supp U_2)>0$. 
Let $m_0$ be a positive integer such that $\mu(X\setminus Y)/m_0< \eta/2$.
\end{step}

\begin{step}[preparing the finite actions]
Let $(G_n)_{n\geq 1}$ be an enumeration of the (finite radius) balls of the 
Schreier graphs of all the
transitive $\FF_{r}$-actions over an infinite set, up to labeled graph 
isomorphism, and for which the number of vertices satisfies $\abs{G_n}\geq m_0$.

Since $G_n$ comes from a transitive action over an infinite set, we can choose 
some 
$\ell\in\{1,...,r\}$ and some $\zeta_n \in G_n$ such that 
there is no $a_\ell$-labeled edge whose source is equal to $\zeta_n $.

Pick $\delta_n ,\xi_n\not\in G_n$, set $G'_n\coloneqq G_n\sqcup\{\delta_n ,\xi_n\}$ and add an $a_\ell$-edge from $\zeta_n $ to $\delta_n $, an $a_1$-edge from $\delta_n $ to $\xi_n$ and an $a_2$-edge from $\xi_n$ to itself. 

In this way we obtain a finite \textit{partial Schreier graph} and this can be 
extended to a 
genuine Schreier graph of an $\FF_r=\la a_1,...,a_r\ra$-action on the same set as 
follows: for 
each $i\in 
\{1, 2, \cdots, r\}$, we consider the connected components of the subgraph 
obtained by 
keeping only the edges labeled $a_i$. These are 
either cycles (we don't modify them) or oriented segments 
(possibly reduced to a single vertex), in which case we add one edge labelled 
$a_i$ from the end  to the 
beginning of 
the segment.

Therefore we obtain an \textit{action $\rho_n$ of $\FF_r$ on the finite set 
$G'_n$ and a special point $\xi_n\in 
G'_n\setminus G_n$ such that $\rho_n(a_2)\xi_n=\xi_n$.} 
\end{step}

\begin{step}[defining the action]
Set $C\coloneqq X\setminus Y$. Consider a partition $C=\sqcup_{n\geq 1}C_n$ 
where $\mu(C_n)>0$ for every $n$. 
We are going to define an amplified version of the action 
$\rho_n$ on $C_n$ as follows. 

For each $n\geq 1$, we take a measurable partition $C_n=\sqcup_{g\in G'_n} 
B_n^g$ such that $\mu(B_n^g)\abs{G'_n}=\mu(C_n)$ for every $g\in G'_n$. Set 
$B_n\coloneqq B_n^{\xi_n}$. Using ergodicity of $\RR$, for every $g\in 
G'_n\setminus\{\xi_n\}$ we choose $\psi_g: B_n\to B_n^g$ in the pseudo full 
group $[[\RR]]$ of $\RR$. In this way we obtain an action $\alpha_n$ of $\FF_r$ 
defined on $C_n$ by the formula \[\text{ if }x\in B_n^{g_0} \text{ and }
 \rho_n(\gamma)g_0=g_1
 \text{ then } \alpha_n(\gamma)x\coloneqq \psi_{g_1}\psi_{g_0}^{-1}(x),\]
 and trivial on $X\setminus C_n$. Thus  $\alpha_n(\FF_r)\leq  [\RR_{\induced C_n}]$.

Gluing all the $\alpha_n$ together, we obtain an action $\alpha_\infty$ of 
$\FF_r$ on $X$ such that $\alpha_\infty(\FF_r)\leq  [\RR_{\induced C}]$ and 
such that $\alpha_\infty$ restricted to $C_n$ is $\alpha_n$.

Let $T\in [\RR_{\induced Y}]$ be the conjugate of the odometer introduced in Step 
\ref{step: smaller set}.
Theorem \ref{Th: existence of evanescent pair} states that the set of $V\in [\RR_T]$  
such that $(T,V)$ is an evanescent pair of generators
for $\RR_T$ is dense so we can choose such a $V$ with $\mu(\supp V)<\eta/2$.
 Let $W\in [\RR_T]$ be such that 
 $\mu(\supp(WU_2W^{-1})\cap \supp V)=0$. Set
\begin{itemize}
	\item $B\coloneqq \cup_n B_n$ and remark that $\mu(B)\leq \mu(C)/m_0<\eta/2$; 
	\item $D\coloneqq  \supp(WU_2W^{-1})\cup \supp V$ and observe that $\mu(Y\setminus D)>\eta/2$.
\end{itemize}

 Therefore there exists a subset $A\subseteq Y\setminus D$ of measure $\mu(A)=\mu(B)$. Let $I\in [\RR]$ be an involution with support $A\cup B$ and which exchanges $A$ and $B$. 

We finally define the desired action $\alpha$ of $\FF_r$ by setting 
\begin{align*}
\alpha(a_1)\coloneqq\ &T\alpha_\infty(a_1),\\ \alpha(a_2)\coloneqq\ & V(WU_2W^{-1}) (I \alpha_\infty(a_2)),\\ \alpha(a_i)\coloneqq\ &U_i\alpha_\infty(a_i)\qquad\text{ for }i\geq 3.
\end{align*}
See Figure~\ref{fig: desired action} for the action of $\alpha(a_2)$.
Note that $a_2$ is the only generator of $\FF_r$ which does not leave the set $Y$ invariant because of the
presence of the involution $I$ in the definition of its action.

\begin{figure}[h]
	\includegraphics{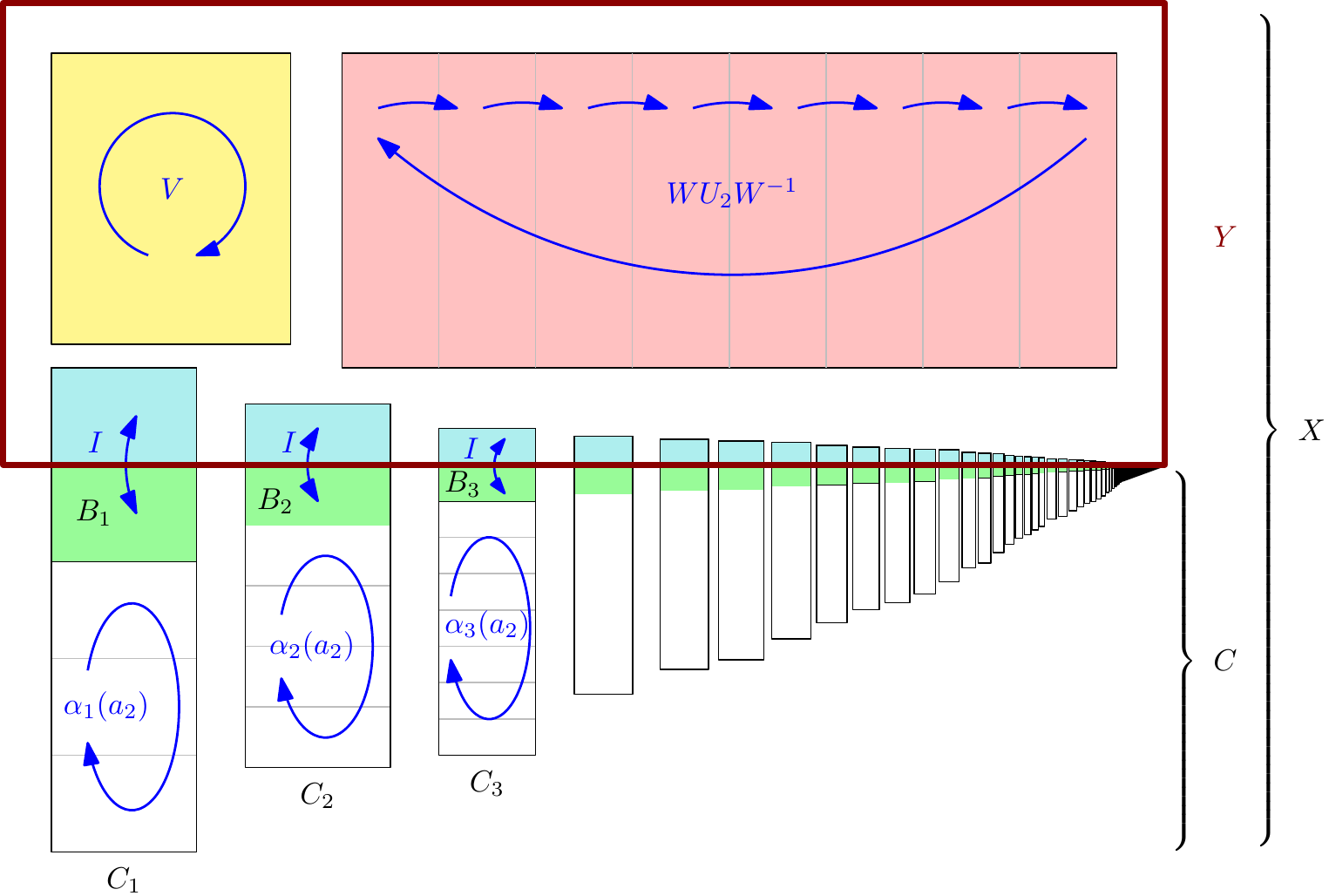}
	\caption{The action of $\alpha(a_2)$ on $X$. }
	\label{fig: desired action}
\end{figure}
\end{step}

\begin{step}[density]\ 

\begin{enumerate}[label=(\alph*)]
\item We claim that $\overline{\alpha(\FF_r)}\geq  [\RR_T]$.
\end{enumerate}

Indeed let $S\in [\RR_T]$ and let us fix $\epsilon>0$. There exists $n_0$, such 
that if we set \(C_{> n_0}\coloneqq \cup_{n> n_0}C_n\) then \(\mu(C_{> 
n_0})<\epsilon/2\). The elements $U_2$, $I$, $\alpha_1(a_2), \ldots, 
\alpha_{n_0}(a_2)$ have uniformly bounded orbits. So we can pick $k\in\N$ such 
that 
$U_2^k$, $I^k$ and $\alpha_1(a_2)^k, \ldots, 
\alpha_{n_0}(a_2)^k$ are the identity.

 By construction  $V$, $WU_2W^{-1}$, $I$ and $\alpha_\infty(a_2)$ have mutually 
 disjoint supports and hence commute. Therefore 
 $\alpha(a_2)^k=V^k\alpha_\infty(a_2)^k$.
 
 The crucial assumption that $(T,V)$ is an evanescent pair of generators now 
 comes into play: there is a word 
 $w(T,V^k)$ which is a product of conjugates of $V^k$ by powers of $T$ such 
 that $d_u(w(T,V^k),S)<\epsilon/2$. Remark that 
 $\alpha(a_1)$ acts on $Y$ the same way as $T$, and 
 that   $\alpha(a_2)^k$ acts on $Y$ the same way as $V^k$.
 Also note that $\alpha(a_1)$ preserves each $C_j$ while $\alpha(a_2)^k$ is the identity on each $C_j$, $j=1, 2, \cdots, n_0$, so that for all  $m\in \Z$, the transformation
 $\alpha(a_1)^m\alpha(a_2)^k\alpha(a_1)^{-m}$ acts on $C_1\cup C_2\cup \cdots\cup C_{n_0}$ as the identity.

 It now follows from the fact that $w$ is a product of conjugates of $V^k$ by powers of $T$ that  $w(T,V^k)$ 
 and $w(\alpha(a_1),\alpha(a_2)^k)$  coincide on $Y$ and can only differ on $C_{> n_0}$ which has 
 measure less than $\epsilon/2$.
 Hence $d_u(w(\alpha(a_1),\alpha(a_2)^k),S)<\epsilon$ which implies that 
 $\overline{\alpha(\FF_r)}\geq  [\RR_T]$.

\begin{enumerate}[label=(\alph*)]
\setcounter{enumi}{1}
\item We claim that $\overline{\alpha(\FF_r)}\geq  [\RR_{\induced Y}]$.
\end{enumerate} 

Recall that $\alpha(a_2)= V(WU_2W^{-1}) (I \alpha_\infty(a_2))$ where $V$, $WU_2W^{-1}$ and $I\alpha_\infty(a_2)$ have pairwise disjoint support. Since $U_2$ extends $\varphi_2$, we get that $W\inv \alpha(a_2)W$ extends $\varphi_2$. By assumption $\alpha(a_3),\ldots,\alpha(a_r)$ extend $\varphi_3,\ldots\varphi_r$ respectively. Moreover $W\in[\RR_T]$, so the claim follows from Proposition~\ref{prop: the good pre-cycles}:
\[\overline{\alpha(\FF_r)}\geq \overline{\la[\RR_T],W^{-1}\alpha(a_2)W,\alpha(a_3),\ldots,\alpha(a_r)\ra}\geq [\RR_{\induced Y}].\]
 
 \begin{enumerate}[label=(\alph*)]
\setcounter{enumi}{2}
\item We claim that $\overline{\alpha(\FF_r)}\geq  [\RR]$.
\end{enumerate} 

This is a direct consequence of Corollary \ref{crl: erg+Amu1/2} granting
that $\alpha(\Gamma) Y=X$, which we will now show.

Clearly $\alpha(a_2) Y\supset \alpha(a_2)A=B=\cup_n B_n$. For every $n$ and 
$g\in G_n'\setminus \{\xi_n\}$, there exists $\gamma\in \Gamma$ of minimal 
length such that $\rho_n(\gamma)\xi_n=g$. Since $\rho_n(a_2)\xi_n=\xi_n$ and 
since $\alpha(a_2)_{\restriction C_n\setminus B_n}=\alpha_n(a_2)$ mimics the 
action of $\rho_n(a_2)$ on $G'_n\setminus \{\xi_n\}$, the minimality of the 
length of $\gamma$ implies that $\alpha(\gamma)B_n=B_n^g$. Since this is true 
for every $g\in G'_n$ we get $\alpha(\Gamma)Y\supset C_n$; and this holds for 
every $n$. We thus have $\alpha(\Gamma)Y=X$ as wanted.
\end{step}

\begin{step}[totipotency]
Consider a transitive action $\rho$ of $\FF_r$ on some infinite set. Let $H$ be 
a Schreier ball such that $\abs{H}\geq m_0$. Then by construction there exists 
$n$ such that $H=G_n\subseteq G'_n$. Also remark that the restriction of the 
Schreier graph of the action $\alpha$ to $\cup_{g\in G_n}B_n^g\subseteq C_n$ 
mimics the partial Schreier graph $H$. Since $\rho$ and $H$ are arbitrary, the 
Schreier 
graph of $\alpha$ contains every sufficiently large Schreier ball of every 
transitive action of $\FF_r$ and this finishes the proof of  the main theorem.
\qed
\end{step}

\begin{remark}\label{rem: continuum many IRS for R}
The subspace $Y\subseteq X$ chosen in Step~\ref{step: smaller set} of the above proof 
coincides with the subset where $\alpha(a_1)$ is aperiodic.
So the event ``no power of $a_1$ belongs to $\Lambda$'' has measure $\mu(Y)$ 
in the IRS $\Stabmap^{\alpha}_{*}\mu$ associated with $\alpha$.
Observe that the measure $\mu(Y)$ can be chosen to take any value from the non-empty interval 
$\left(\max\left\{\frac{\cost(R)-1}{r-1}, \frac{1}{2}\right\},1\right)$.
Recalling that the density of $\alpha(\Gamma)$ implies $\Stabmap^{\alpha}$ is essentially injective, then the following holds:
{\em Every ergodic p.m.p.\ equivalence relation $\RR$ of cost $<r$ can be realized (up to a null-set) by the 
action of $\FF_r\acts\Sub(\FF_r)$ for continuum many different totipotent IRS's of $\FF_r$.}
\end{remark}

\bibliographystyle{alpha}

 \bigskip
  {\footnotesize
  \noindent
  {A.C., \textsc{Institut für Algebra und Geometrie, KIT, 76128 Karlsruhe, Germany}}\par\nopagebreak \texttt{alessandro.carderi@kit.edu}

  \noindent
{D.G., \textsc{Université de Lyon, CNRS, ENS-Lyon, 
Unité de Mathématiques Pures et Appliquées,  69007 Lyon, France}}
\par\nopagebreak \texttt{damien.gaboriau@ens-lyon.fr}

  \noindent
{F.L.M., \textsc{Université de Paris, Sorbonne Université, CNRS,
    Institut de Mathématiques de Jussieu-Paris Rive Gauche,
    F-75013 Paris, France}}
\par\nopagebreak \texttt{{francois.le-maitre@imj-prg.fr}
}
}
\end{document}